\newtheorem{theorem}{Theorem}%
\newtheorem{lemma}[theorem]{Lemma}%
\newtheorem{example}{Example}%
\newtheorem{remark}{Remark}%
\newcounter{assumcounter}
\newtheorem{assum}[assumcounter]{Assumption}%
\newtheorem{definition}{Definition}%
\newcommand{\eps}{\varepsilon}
\newcommand{\R}{\mathbb{R}}
\newcommand{\N}{\mathbb{N}}
\newcommand{\calH}{\mathcal{H}}
\newcommand{\J}{\mathcal{J}}
\newcommand{\Jeps}{\mathcal{J}_\varepsilon^2 f}
\newcommand{\Txeps}{\mathcal{T}_{x,\varepsilon}}
\newcommand{\Rxeps}{R_{x,\varepsilon}}
\newcommand{\TWxeps}{\mathcal{T}^W_{x,\varepsilon}}
\newcommand{\T}{\mathcal{T}}
\newcommand{\deriv}{\mathrm{d}}
\DeclareMathOperator*{\conv}{conv}
\DeclareMathOperator*{\pr}{pr}
\DeclareMathOperator*{\argmin}{arg\,min}
\DeclareMathOperator*{\argmax}{arg\,max}
\newcommand{\sogs}{\texttt{SOGS}}
\newcommand{\gradsamp}{\texttt{Gradsamp}}
\newcommand{\dgs}{\texttt{DGS}}
\newcommand{\hanso}{\texttt{HANSO}}
\newcommand{\slqpgs}{\texttt{SLQPGS}}
\newcommand{\lmbm}{\texttt{LMBM}}
\newcommand{\superpolyak}{\texttt{SuperPolyak}}
\newcommand{\vubundle}{\texttt{VUbundle}}
\newcommand{\vu}{$\mathcal{VU}$}
\begin{document}

\title{Using second-order information in gradient sampling methods for nonsmooth optimization}

\author*[1]{\fnm{Bennet} \sur{Gebken}}\email{bennet.gebken@cit.tum.de}

\affil*[1]{\orgdiv{Department of Mathematics}, \orgname{Technical University of Munich}, \orgaddress{\street{Boltzmannstr. 3}, \city{Garching b. München}, \postcode{85748}, \country{Germany}}}

\abstract{In this article, we introduce a novel concept for second-order information of a nonsmooth function inspired by the Goldstein $\eps$-subdifferential. It comprises the coefficients of all existing second-order Taylor expansions in an $\eps$-ball around a given point. Based on this concept, we define a model of the objective as the maximum of these Taylor expansions, and derive a sampling scheme for its approximation in practice. Minimization of this model induces a simple descent method, for which we show convergence for the case where the objective is convex or of max-type. While we do not prove any rate of convergence of this method, numerical experiments suggest superlinear behavior with respect to the number of oracle calls of the objective.}

\keywords{nonsmooth optimization, nonconvex optimization, superlinear convergence, gradient sampling, trust-region method}
\pacs[MSC Classification]{90C30, 90C56, 49J52}

\maketitle

\section{Introduction} \label{sec:introduction}

    Nonsmooth optimization is concerned with optimizing functions that are not necessarily differentiable. Solution methods from smooth optimization, like gradient descent, may fail to work in this case, as even when the gradient $\nabla f(x)$ of a nonsmooth function $f$ at a point $x$ exists, it may not yield a stable description of the local behavior of $f$ around $x$. This issue can be overcome by considering multiple gradients at the same time: In \emph{bundle methods} \cite{MN1992,BKM2014}, gradients from previous iterations are collected to build a \emph{cutting-plane model} of the objective at the current point. Alternatively, in \emph{gradient sampling methods} \cite{BLO2005,BCL2020,MY2012,GP2021,ZLJ2020,DDL2022}, $\nabla f(x)$ is replaced by (an approximation of) the \emph{Goldstein} $\eps$\emph{-subdifferential} $\partial_\eps f(x)$ \cite{G1977}. For several algorithms from these classes, a linear speed of convergence (at best) was proven \cite{HSS2017,R1999,DG2023}. Since for smooth optimization, Newton's method yields (at least) superlinear convergence by using second-order derivatives, the question arises whether such a fast method can be constructed for the nonsmooth case as well. It turns out that this task is quite challenging: In \cite{MS2012} it was called a ``wondrous grail'' and it can be related Problem 14 in \cite{H2007}.
    
    To the best of our knowledge, there are currently two methods for which superlinear convergence can be proven for certain classes of nonsmooth optimization problems. The first method is the bundle-based \vu\emph{-algorithm} from \cite{MS2005}. The idea of this method is to identify a subspace $\mathcal{U}$ along which $f$ behaves smoothly and to then perform a Newton-like step tangent to $\mathcal{U}$. One of the challenges of this approach is to identify the subspace $\mathcal{U}$ automatically. It appears that the only mechanism that is guaranteed to achieve this is described in \cite{DSS2009}, and requires the objective to be piecewise twice continuously differentiable with convex selection functions (with some additional technical assumptions). The second method is the recent \emph{SuperPolyak} \cite{CD2024}. It is based on the subproblem of the bundle method by Polyak \cite{P1969}, but uses iteratively computed bundle information and treats the cutting-plane inequalities as equalities. The objective functions it is designed for are functions that possess a \emph{sharp} minimum (i.e., the objective value grows linearly away from the minimum) with known optimal value. While these are the only methods (that we are aware of) for which superlinear convergence can be proven, there are several other methods that attempt to achieve this rate by using some form of second-order information: In \cite{M1984,LV1998,G2002}, methods were proposed that arise by infusing second-order information into cutting-plane models. Alternatively, there are methods that directly generalize quasi-Newton to the nonsmooth case \cite{CO2012,CQ2015,CRZ2019,LV1999,H2004,LO2013,CMO2017}. However, there are certain drawbacks in all of the approaches mentioned this far: Both the \vu-algorithm and SuperPolyak require certain special structures of $f$. For \cite{M1984,LV1998,G2002}, a proper definition of ``second-order information'' of a nonsmooth function is missing, which makes it difficult to exploit this information theoretically for proving fast convergence. Similarly, for the generalized quasi-Newton approaches, there is no corresponding ``nonsmooth Newton's method'' from which their convergence theory can be inherited.

    The goal of this article is to construct a fast method that avoids the above drawbacks. The basic idea is to use the maximum of existing second-order Taylor expansions around a point as a local model of the objective function, as was already proposed in \cite{LV1998}. There, whenever the Hessian matrix does not exist at a point $y$, ``the Hessian at some point infinitely close to $y$'' (cf.\ \cite{LV1998}, p.\ 2) is taken instead. In contrast to this, we will base our model on the \emph{second-order} $\eps$\emph{-jet}, which can be seen as a second-order version of the Goldstein $\eps$-subdifferential. It contains the coefficients of all existing second-order Taylor expansions (and their limits) in an $\eps$-ball around a given point and allows us to define the aforementioned model in a well-defined way. Based on minimization of this model, we first derive a purely abstract descent method (Algo.\ \ref{algo:abstract_descent_method}) for the case where the entire $\eps$-jet is available at every point. To obtain a method that is actually implementable, we then derive a sampling strategy to approximate the $\eps$-jet (Algo.\ \ref{algo:approx_jet}), for which availability of a single element of the $0$-jet at every point (i.e., almost always simply the objective value, gradient and Hessian at that point)
    is sufficient. Inserting this sampling scheme into the abstract method yields a practical descent method (Algo.\ \ref{algo:practical_descent_method}), for which we prove global convergence for the case where the objective is convex or of max-type. We do not prove anything about the rate of convergence of the abstract or practical method here. However, numerical experiments suggest that it is fast with respect to oracle calls (i.e., evaluations of $f$ and its derivatives). For reproducibility and transparency, the code for all of our experiments is available at \url{https://github.com/b-gebken/SOGS}.

    It is important to note that fast convergence \textit{with respect to oracle calls} does not necessarily imply a fast runtime. For example, when comparing gradient descent and Newton's method in smooth optimization, the latter yields superlinear convergence with respect to oracle calls. But every iteration of Newton's method requires the solution of a system of linear equations, which is more expensive than an iteration of gradient descent, and may outweigh the fast convergence (see, e.g., \cite{GBC2016}, Section 8.6.1). In a similar vein, every iteration of our method will require the solution of a subproblem (see \eqref{eq:min_TxepsW_quad}) which is a nonconvex \emph{quadratically constrained quadratic program (QCQP)} (see, e.g., \cite{BV2004}, Section 4.4).
    We could apply several modifications to simplify its solution, like positive definite modifications of the (potentially nonconvex) quadratic terms (as in \cite{LV1998}), summing the quadratic terms and moving them to the objective of the subproblem (as in \cite{G2002}) or using a quasi-Newton matrix for the second-order information to avoid evaluation of the Hessian matrix. However, we refrain from doing so, as we believe that it is important to first understand whether superlinear convergence can be obtained in the best possible setting (with high computational effort). If this is not the case, then the above techniques likely do not lead to superlinear convergence either.
    To put it in another way: We first want to figure out what Newton's method for nonsmooth optimization could be in this article, before trying to derive quasi-Newton variants of it in future work.

    The remainder of this article is structured as follows: In Section \ref{sec:preliminaries} we introduce the basics of nonsmooth analysis. In Section \ref{subsec:second_order_eps_jet} we define the second-order $\eps$-jet and derive some of its theoretical properties. Afterwards, in Section \ref{subsec:second_order_model}, we define a model based on the $\eps$-jet and prove error estimates for the case where $f$ is convex or of max-type. In Section \ref{sec:descent_method} we derive our descent method. We begin with the abstract version in Section \ref{subsec:abstract_algorithm} and prove its convergence. We then derive the sampling scheme for the $\eps$-jet and prove its termination in Section \ref{subsec:approximating_eps_jet}. In Section \ref{subsec:practical_algorithm}, we insert the sampling scheme into the abstract method to obtain the practical descent method, for which we prove convergence as well. Section \ref{sec:numerical_experiments} contains our numerical experiments. We first compare the performance of our method to other solvers for nonsmooth optimization problems using common test problems. Afterwards, we compare it to the \vu-algorithm and SuperPolyak, which both have superlinear speed of convergence on their respective problem classes. Finally, in Section \ref{sec:conclusion}, we summarize our results and discuss possible directions for future research.

\section{Preliminaries} \label{sec:preliminaries}

    In this section, we introduce the basics of nonsmooth analysis that are used throughout the article. More thorough introductions can be found in \cite{C1990,BKM2014}. To this end, let $f : \R^n \rightarrow \R$ be locally Lipschitz continuous and let $\Omega$ be the set of points in which $f$ is not differentiable. By Rademacher's Theorem, $\Omega$ is a null set. The \emph{Clarke subdifferential of} $f$ \emph{at} $x \in \R^n$ is defined as
    \begin{equation} \label{eq:def_clarke_subdifferential}
        \begin{aligned}
            \partial f(x) := \conv &\left( \left\{ \xi \in \R^n : \exists (x^j)_j \in \R^n \setminus \Omega \text{ with } x^j \rightarrow x \text{ and } \nabla f(x^j) \rightarrow \xi \right\} \right),
        \end{aligned}
    \end{equation}
    and its elements are called \emph{subgradients}. It is nonempty and compact. A necessary condition for a point $x$ to be locally optimal is that $0 \in \partial f(x)$, which is referred to as $x$ being a \emph{critical} point. As a set-valued map, the map $x \mapsto \partial f(x)$ is upper semicontinuous. A more ``stable'' subdifferential which circumvents some of the practical issues of the Clarke subdifferential (cf.\ \cite{L1989}, Section 3) is the \emph{Goldstein} $\eps$\emph{-subdifferential} \cite{G1977}. For $x \in \R^n$ and $\eps \geq 0$ it can be defined as
    \begin{align} \label{eq:def_eps_subdifferential}
        \partial_\eps f(x) := \conv \left( \bigcap_{\delta > \eps} \overline{\{ \nabla f(y) : y \in B_\delta(x) \setminus \Omega \}} \right),
    \end{align}
    where $B_\delta(x) := \{ y \in \R^n : \| x - y \| \leq \delta \}$ and $\| \cdot \|$ is the Euclidean norm. Both subdifferentials are related via
    \begin{align} \label{eq:eps_subdiff_via_clarke}
        \partial_\eps f(x) = \conv(\partial f(B_\eps(x)))
    \end{align}
    (cf.\ \cite{K2007}, Eq.\ (2.1)).
    Like the Clarke subdifferential, $\partial_\eps f(x)$ is nonempty and compact. Furthermore, it can be used to compute descent directions of $f$: Let $\bar{v} = -\argmin_{\xi \in \partial_\eps f(x)} \| \xi \|$. Then the mean-value theorem (\cite{C1990}, Theorem 2.3.7) combined with a well-known result from convex analysis (cf.\ \cite{CG1959}) yields
    \begin{align} \label{eq:eps_subdiff_decrease}
        f(x + t \bar{v}) - f(x) \leq - t \| \bar{v} \|^2 
        \quad \forall t \in (0,\eps/\| \bar{v} \|].
    \end{align}
    For $k \in \N$ we say that a function is $C^k$ if it is $k$-times continuously differentiable.

\section{Second-order \texorpdfstring{\boldmath{$\eps$}}{epsilon}-jet and model} \label{sec:second_order_eps_jet_and_model}

In this section, we first define the second-order $\eps$-jet and derive some of its properties. Afterwards, we use it to define and analyze the second-order model that will later be the foundation of our descent method. This includes error estimates for the cases where $f$ is convex or of max-type.

This article contains results for $f$ belonging to different classes of functions. To avoid confusion, each result explicitly states which assumption is required.

\subsection{Second-order \texorpdfstring{\boldmath{$\eps$}}{epsilon}-jet} \label{subsec:second_order_eps_jet}

    We begin by introducing the class of functions to which our approach is applicable. To this end, let $f : \R^n \rightarrow \R$ and let $\Omega^2 \subseteq \R^n$ be the set of points in which $f$ is not twice differentiable.

    \begin{assum} \label{assum:A1}
        Assume that
        \begin{enumerate}[itemindent=25pt,label=(A1.\arabic*)]
            \item \label{enum:A1_1} $f$ is locally Lipschitz continuous,
            \item \label{enum:A1_2} $\Omega^2$ is a null set and
            \item \label{enum:A1_3} for all $x \in \R^n$ there is an open set $U \subseteq \R^n$ with $x \in U$ such that $\{ \nabla^2 f(y) : y \in U \setminus \Omega^2 \}$
            is bounded.
        \end{enumerate}
    \end{assum}

    First of all, \ref{enum:A1_1} allows us to use the classical theory of nonsmooth analysis from Section \ref{sec:preliminaries}. Since we want to define our second-order object by evaluating the Hessian matrix of $f$ at different points in $\R^n$, $\R^n \setminus \Omega^2$ should at least be dense. The stronger assumption \ref{enum:A1_2} is made to obtain consistency with the $\eps$-subdifferential later on (cf.\ Lemma \ref{lem:jet_projections}). Finally, \ref{enum:A1_3} assures boundedness. The second-order object that we use throughout the article is now defined as follows:

    \begin{definition} \label{def:second_order_eps_jet}
        Let $x \in \R^n$ and $\eps \geq 0$. For $f$ satisfying Assumption \ref{assum:A1}, the set
        \begin{align*}
            \Jeps(x) := \bigcap_{\delta > \eps} \overline{\{ (y,f(y),\nabla f(y), \nabla^2 f(y)) : y \in B_\delta(x) \setminus \Omega^2 \}} 
            \subseteq \R^n \times \R \times \R^n \times \R^{n \times n}
        \end{align*}
        is called the \emph{second-order $\eps$-jet of} $f$ \emph{at} $x$.\footnote{The term ``jet'' is inspired by the classical notion of jets for smooth functions (\cite{GG1973}, §2) and the related concept of \emph{Fréchet second order subjets} in \cite{IP1997}.}
    \end{definition}

    Compared to the definition of the Goldstein $\eps$-subdifferential (cf.\ \eqref{eq:def_eps_subdifferential}) there are two modifications: Firstly, since we later want to define the model based on second-order Taylor expansions, the gradient $\nabla f(y)$ is replaced by everything that is needed for building these expansions, represented as the $4$-tuple $(y,f(y),\nabla f(y), \nabla^2 f(y))$, and $\Omega$ is replaced by $\Omega^2$. Secondly, we omitted taking the convex hull because a convex combination of such $4$-tuples does in general not correspond to any Taylor expansion of $f$. In the following, we analyze some of the properties of $\Jeps(x)$. First of all, like the $\eps$-subdifferential, $\Jeps(x)$ is nonempty and compact:
    \begin{lemma} \label{lem:jet_properties}
        Assume that $f$ satisfies Assumption \ref{assum:A1}. Then $\Jeps(x)$ is nonempty and compact for all $x \in \R^n$ and $\eps \geq 0$.
    \end{lemma}
    \vspace{-15pt}
    \begin{proof}
        We first show that the set
        \begin{align} \label{eq:def_T_delta}
            T(\delta) := \{ (y,f(y),\nabla f(y), \nabla^2 f(y)) : y \in B_\delta(x) \setminus \Omega^2 \}
        \end{align}
        is bounded for all $\delta > \eps$. Since we consider boundedness in a finite-dimensional product space, it is equivalent to boundedness of the projections on the individual factors. Clearly, $B_\delta(x) \setminus \Omega^2$ and $f(B_\delta(x) \setminus \Omega^2)$ are bounded (due to continuity of $f$). Furthermore, $\{ \nabla f(y) : y \in B_\delta(x) \setminus \Omega^2 \}$ is bounded as a subset of the compact set $\partial_\delta f(x)$. To see that $\{ \nabla^2 f(y) : y \in B_\delta(x) \setminus \Omega^2\}$ is bounded as well, let $(U_i)_{i \in I}$ be an open cover of $B_\delta(x)$ induced by \ref{enum:A1_3} with corresponding upper bounds $(K_i)_{i \in I}$ (for any matrix norm). Due to compactness of $B_\delta(x)$ there is a finite subcover $(U_i)_{i \in I' \subseteq I}$. In particular, $\max_{i \in I'} K_i$ is an upper bound for $\{ \nabla^2 f(y) : y \in B_\delta(x) \setminus \Omega^2\}$. \\
        Thus, $T(\delta)$ is bounded and $\overline{T(\delta)}$ is compact. In particular, $\overline{T(\delta)}$ is nonempty due to density of $\R^n \setminus \Omega^2$ by \ref{enum:A1_2}. This means that $(\overline{T(\delta)})_{\delta > \eps}$ is a nested family of nonempty, compact sets and by Cantor's intersection theorem (see, e.g., Thm.\ 1 in \cite{H1985}), the intersection $\Jeps(x) = \bigcap_{\delta > \eps} \overline{T(\delta)}$ is nonempty and compact.
    \end{proof}

    The first-order information that is contained in the second-order $\eps$-jet is consistent with the $\eps$-subdifferential. To see this, let $\pr_i$, $i \in \{1,\dots,4\}$, be the projections of $\R^n \times \R \times \R^n \times \R^{n \times n}$ onto its factors. Then we have the following relationship:

    \begin{lemma} \label{lem:jet_projections}
        Assume that $f$ satisfies Assumption \ref{assum:A1}. Let $x \in \R^n$ and $\eps \geq 0$. Then
        \begin{align*}
            &\{ (y,\varphi,\xi)  : \exists \calH \in \R^{n \times n} \text{ with } (y,\varphi,\xi,\calH) \in \Jeps(x) \} \\
            &= \{ (y,f(y),\xi) : y \in B_\eps(x), \xi \in \partial f(y) \}.
        \end{align*}
        In particular, it holds
        \begin{enumerate}[label=(\alph*)]
            \item $\pr_1(\Jeps(x)) = B_\eps(x)$,
            \item $\pr_2(\Jeps(x)) = f(B_\eps(x))$,
            \item $\conv(\pr_3(\Jeps(x))) = \partial_\eps f(x)$.
        \end{enumerate}
    \end{lemma}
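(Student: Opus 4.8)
The plan is to prove the displayed set equality by two inclusions and then read off (i)--(iii) as projections. Throughout I would reuse the auxiliary sets $T(\delta) = \{ (z,f(z),\nabla f(z),\nabla^2 f(z)) : z \in B_\delta(x) \cap D^2 \}$ from the proof of Lemma \ref{lem:jet_properties}, together with the fact that the family $(\overline{T(\delta)})_{\delta > \varepsilon}$ is nested, so that membership in $\Jeps(x)$ is the same as membership in $\overline{T(\delta)}$ for every $\delta > \varepsilon$. Note also that $\R^n \setminus D^2$ is a null superset of $\Omega$ (twice differentiable implies differentiable), so by the remark after Definition \ref{def:Clarke_subdifferential} I may use $D^2$ in place of $\R^n \setminus \Omega$ when computing $\partial f$.

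For the inclusion ``$\subseteq$'', suppose $(y,\varphi,\xi)$ lies in the left-hand set, witnessed by some $\calH$ with $(y,\varphi,\xi,\calH) \in \Jeps(x)$. Choosing $\delta_k \downarrow \varepsilon$ and using $(y,\varphi,\xi,\calH) \in \overline{T(\delta_k)}$, a diagonal argument produces points $z^k \in B_{\delta_k}(x) \cap D^2$ whose associated $4$-tuples converge to $(y,\varphi,\xi,\calH)$. Passing to the limit gives $\|x-y\| \le \lim_k \delta_k = \varepsilon$, i.e.\ $y \in B_\varepsilon(x)$; continuity of $f$ forces $\varphi = f(y)$; and $\xi = \lim_k \nabla f(z^k)$ with $z^k \to y$ and $z^k \in D^2$, so $\xi \in \partial f(y)$ by the (reformulated) definition of the Clarke subdifferential. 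Hence $(y,f(y),\xi)$ lies in the right-hand set.

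For the converse ``$\supseteq$'', fix $y \in B_\varepsilon(x)$ and $\xi \in \partial f(y)$. The mechanism I would use is to produce $z^k \in D^2$ with $z^k \to y$ and $\nabla f(z^k) \to \xi$; then, since the Hessians $\nabla^2 f(z^k)$ are bounded by \eqref{eq:assum_hessian_bounded} (exactly as in Lemma \ref{lem:jet_properties}), Bolzano--Weierstrass yields a subsequence with $\nabla^2 f(z^k) \to \calH$, and as $z^k \to y$ with $y \in B_\varepsilon(x)$ the tail of $(z^k)$ lies in $B_\delta(x)$ for every $\delta > \varepsilon$, so $(y,f(y),\xi,\calH) \in \overline{T(\delta)}$ for all such $\delta$ and therefore $(y,f(y),\xi,\calH) \in \Jeps(x)$. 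I expect this converse to be the main obstacle, and specifically the very first step: a general Clarke subgradient $\xi \in \partial f(y)$ is only a convex combination of \emph{limiting} gradients, so it need not be realizable as a single limit $\lim_k \nabla f(z^k)$. The delicate point is thus to control the convex-hull operation in Definition \ref{def:Clarke_subdifferential}; concretely I would first establish the identification at the level of limiting (Bouligand) gradients, where the required sequence $(z^k)$ exists by definition, and only afterwards reconcile this with the convex hull.

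Finally, (i) and (ii) follow by projecting the displayed equality onto its first and second coordinates, since there $y$ ranges over all of $B_\varepsilon(x)$ and $\varphi = f(y)$. For (iii), projecting to the third coordinate and convexifying gives $\conv(\pr_3(\Jeps(x))) = \conv(\partial f(B_\varepsilon(x))) = \partial_\varepsilon f(x)$ by \eqref{eq:eps_subdiff_via_clarke}; the convex hull here absorbs exactly the gap between limiting gradients and full Clarke subgradients identified above, which is why (iii) is robust even though the fibrewise identification of subgradients is the subtle part of the main equality.
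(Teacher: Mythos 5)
Your ``$\subseteq$'' direction and the way you read off (i)--(iii) from the displayed equality coincide with the paper's argument, including the diagonal extraction of points $z^k \in B_{\delta_k}(x) \cap D^2$ from the nested closures $\overline{T(\delta)}$. The issue is the ``$\supseteq$'' direction, which you correctly single out as the main obstacle but then leave open: you propose to ``first establish the identification at the level of limiting (Bouligand) gradients \dots and only afterwards reconcile this with the convex hull,'' without saying how that reconciliation would work. For the fibrewise equality as displayed, it cannot work. Take $f(x) = |x|$, $x = y = 0$ and $\xi = 0 \in \partial f(0) = [-1,1]$: every gradient of $f$ near $0$ equals $\pm 1$, so the third component of every element of $\overline{T(\delta)}$, and hence of $\Jeps(x)$, lies in the closed set $\{-1,1\}$. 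Thus $(0,0,0)$ belongs to the right-hand set but not to the left-hand set. The convex hull in Definition \ref{def:Clarke_subdifferential} is taken \emph{after} passing to limits of gradients, whereas the jet construction never convexifies, so Clarke subgradients that are proper convex combinations of limiting gradients are unreachable.

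For what it is worth, the paper's own proof of ``$\supseteq$'' asserts that ``by definition of the Clarke subdifferential \dots there is a sequence $(y^i)_i \in D^2$ with $\lim_{i \rightarrow \infty} \nabla f(y^i) = \xi$'' for an arbitrary $\xi \in \partial f(y)$, which silently discards the convex hull and is subject to the same counterexample; your hesitation therefore points at a genuine inaccuracy in the statement rather than merely a hole in your own argument. The natural repair is the one you implicitly suggest: replace $\partial f(y)$ on the right-hand side by the set of limiting gradients $\partial_B f(y) := \{ \xi : \exists (y^i)_i \in D^2,\ y^i \rightarrow y,\ \nabla f(y^i) \rightarrow \xi \}$, for which your Bolzano--Weierstrass argument on the Hessians (using \eqref{eq:assum_hessian_bounded} as in Lemma \ref{lem:jet_properties}) completes ``$\supseteq$'' verbatim. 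All three consequences survive this weakening: (i) and (ii) because $\partial_B f(y)$ is nonempty, and (iii) because $\conv(\partial_B f(B_\varepsilon(x))) = \conv(\partial f(B_\varepsilon(x))) = \partial_\varepsilon f(x)$, using $\partial f(y) = \conv(\partial_B f(y))$ and \eqref{eq:eps_subdiff_via_clarke} --- exactly the absorption you describe at the end. But as submitted, your proposal does not prove the lemma in the form stated, and no argument can.
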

    \vspace{-15pt}
    \begin{proof}
        ``$\subseteq$'': Let $(y,\varphi,\xi,\calH) \in \Jeps(x)$ and let $T(\delta)$ be defined as in \eqref{eq:def_T_delta}. Then by definition we can construct sequences $(J^i)_i = (y^i,f(y^i),\nabla f(y^i),\nabla^2 f(y^i))_i$ and $(\delta_i)_i \in \R^{>0}$ with $\delta_i > \eps$, $\delta_i \rightarrow \eps$, $J^i \in T(\delta_i)$ and $J^i \rightarrow (y,\varphi,\xi,\calH)$. Continuity of $f$ implies $\varphi = f(y)$. Furthermore, since $\R^n \setminus \Omega^2 \subseteq \R^n \setminus \Omega$, $\xi \in \partial f(y)$ follows from \eqref{eq:def_clarke_subdifferential}. \\
        ``$\supseteq$'': Let $y \in B_\eps(x)$ and $\xi \in \partial f(y)$. By \cite{C1990}, Theorem 2.5.1, and \ref{enum:A1_2}, there must be a sequence $(y^i)_i \in \R^n \setminus \Omega^2$ with $y^i \rightarrow y$ and $\nabla f(y^i) \rightarrow \xi$. Let $J^i := (y^i,f(y^i),\nabla f(y^i),\nabla^2 f(y^i))$ for $i \in \N$. Then there is some $\eps' > \eps$ such that $J_i \in \J^2_{\eps'} f(x)$ for all $i \in \N$. Since $\J^2_{\eps'} f(x)$ is compact by Lemma \ref{lem:jet_properties}, there is a subsequence $(i_j)_j$ and a matrix $\calH \in \R^{n \times n}$ such that $\lim_{j \rightarrow \infty} J^{i_j} = (y,f(y),\xi,\calH) =: \bar{J}$. By construction it holds $\bar{J} \in \overline{T(\delta)}$ for all $\delta > \eps$, so $\bar{J} \in \Jeps(x)$. \\
        Finally, for (a) and (b) note that the Clarke subdifferential is always nonempty and for (c) recall \eqref{eq:eps_subdiff_via_clarke}. 
    \end{proof}

    Actually evaluating the second-order $\eps$-jet via Definition \ref{def:second_order_eps_jet} is cumbersome due to the closure and the intersection. Fortunately, like for the Clarke subdifferential, there is a simpler representation in case $f$ is defined piecewise. To this end, let $D^2 \subseteq \R^n$ be the set of points at which $f$ is $C^2$ and consider the following class of functions:
    \begin{assum} \label{assum:A2}
        The function $f$ is continuous and there is a finite set $I$ and $C^2$ functions $f_k : \R^n \rightarrow \R$, $k \in I$, called \emph{selection functions}, such that 
        \begin{enumerate}[itemindent=25pt,label=(A2.\arabic*)]
            \item \label{enum:A2_1} $f(x) \in \{ f_k(x) : k \in I\}$ for all $x \in \R^n$ and
            \item \label{enum:A2_2} $D^2 = \R^n \setminus \Omega$, i.e., $f$ is $C^2$ in all points where it is differentiable.
        \end{enumerate}
    \end{assum}

    The condition \ref{enum:A2_1} implies that $f$ is a \emph{piecewise twice differentiable} function in the sense of \cite{S2012}, Section 4.1, and \ref{enum:A2_2} is added for consistency with Assumption \ref{assum:A1}. Before deriving a simpler representation for the $\eps$-jet for this class of functions, we first show that Assumption \ref{assum:A2} implies Assumption \ref{assum:A1}. To this end, let 
    \begin{align*}
        A^e(x) := \left\{ k \in I : x \in \overline{ \{ y \in \R^n : f(y) = f_k(y) \}^\circ } \right\}
    \end{align*}
    be the \emph{essentially active set of} $f$ \emph{at} $x$ (with $S^\circ$ denoting the interior of a set $S \subseteq \R^n$). By the proof of Proposition 4.1.5 in \cite{S2012}, $A^e(x)$ is always nonempty.

    \begin{lemma} \label{lem:PC3_properties}
        Assumption \ref{assum:A2} implies Assumption \ref{assum:A1}.
    \end{lemma}
    \vspace{-15pt}
    \begin{proof}
        \ref{enum:A1_1} follows from \cite{S2012}, Corollary 4.1.1. \ref{enum:A2_2} implies $\Omega = \Omega^2 = \R^n \setminus D^2$, so \ref{enum:A1_2} holds by Rademacher's theorem. To show that \ref{enum:A1_3} holds let $U \subseteq \R^n$ be any bounded set and $\bar{y} \in U \setminus \Omega^2 = U \cap D^2$. Since $A^e(\bar{y}) \neq \emptyset$, there are $\bar{k} \in A^e(\bar{y})$ and a sequence $(y^i)_i \in \{ y \in \R^n : f(y) = f_{\bar{k}}(y) \}^\circ$ with $y^i \rightarrow \bar{y}$. Since $\{ y \in \R^n : f(y) = f_{\bar{k}}(y) \}^\circ$ is open and a subset of $D^2$, we have $\nabla^2 f(y^i) = \nabla^2 f_k(y^i)$ for all $i \in \N$. By continuity of $\nabla^2 f$ and $\nabla^2 f_k$ in $D^2$, this implies that $\nabla^2 f(\bar{y}) = \nabla^2 f_k(\bar{y})$. The proof follows from boundedness of $U$ and continuity of $\nabla^2 f_k$, $k \in I$.
    \end{proof}

    For functions satisfying \ref{enum:A2_1}, the Clarke subdifferential has the well-known representation $\partial f(x) = \conv(\{ \nabla f_k(x) : k \in A^e(x) \})$ (cf.\ \cite{S2012}, Proposition 4.3.1). For the $\eps$-jet we obtain an analogous representation:

    \begin{lemma} \label{lem:jet_for_max_fun}
        Assume that $f$ satisfies Assumption \ref{assum:A2}. Then
        \begin{align} \label{eq:jet_for_max_fun}
            \Jeps(x) = \{ (y,f_k(y),\nabla f_k(y), \nabla^2 f_k(y)) : y \in B_\eps(x), k \in A^e(y) \}
        \end{align}
        for all $x \in \R^n$, $\eps \geq 0$.
    \end{lemma}
    \vspace{-15pt}
    \begin{proof}
        \textbf{Part 1:} Define $T(\delta)$ as in the proof of Lemma \ref{lem:jet_properties}. We first show that
        \begin{align*}
            T(\delta) 
            = \{ (y,f_k(y),\nabla f_k(y), \nabla^2 f_k(y)) : y \in B_\delta(x) \cap D^2, k \in A^e(y) \}
            \quad \forall \delta > 0.
        \end{align*}
        To this end, for $\delta > 0$ let $y \in B_\delta(x) \setminus \Omega^2 = B_\delta(x) \cap D^2$. Since $A^e(y) \neq \emptyset$ there are $k \in A^e(y)$ and a sequence $(y^i)_i \in \{ y \in \R^n : f(y) = f_{k}(y) \}^\circ$ with $y^i \rightarrow y$. Since $\{ y \in \R^n : f(y) = f_{k}(y) \}^\circ$ is open and a subset of $D^2$, we have
        \begin{align*}
            (y^i,f(y^i),\nabla f(y^i), \nabla^2 f(y^i))
            = (y^i,f_k(y^i),\nabla f_k(y^i), \nabla^2 f_k(y^i))
            \quad \forall i \in \N.
        \end{align*}
        By continuity of $f$ and $f_k$ and their derivatives in $D^2$, taking the limit yields
        \begin{align*}
            (y,f(y),\nabla f(y), \nabla^2 f(y))
            = (y,f_k(y),\nabla f_k(y), \nabla^2 f_k(y))
            \quad \forall i \in \N.
        \end{align*}
        \textbf{Part 2:} Proof of ``$\subseteq$'' in \eqref{eq:jet_for_max_fun}: Let $(\bar{y},f(\bar{y}),\bar{\xi},\bar{\calH}) \in \Jeps(x)$. Then there are sequences $(J^i)_i = (y^i,f(y^i),\xi^i,\calH^i)_i$ and $(\delta_i)_i \in \R^{>0}$ with $\delta_i > \eps$, $\delta_i \rightarrow \eps$, $J^i \in T(\delta_i)$ and $J^i \rightarrow (\bar{y},f(\bar{y}),\bar{\xi},\bar{\calH})$. By Part 1 and since $I$ is finite, there must be some $k \in I$ such that $k \in A^e(y^i)$ and $J^i = (y^i,f_k(y^i),\nabla f_k(y^i), \nabla^2 f_k(y^i))$ for infinitely many $i$. Assume w.l.o.g.\ that this holds for all $i \in \N$. By continuity of $f_k$ and its derivatives, it holds
        \begin{align*}
            (\bar{y},f(\bar{y}),\bar{\xi},\bar{\calH})
            = \lim_{i \rightarrow \infty} J^i
            = (\bar{y},f_k(\bar{y}),\nabla f_k(\bar{y}), \nabla^2 f_k(\bar{y})).
        \end{align*}
        Furthermore, from the definition of $A^e$, it is easy to see that $k \in A^e(\bar{y})$. \\
        Proof of ``$\supseteq$'' in \eqref{eq:jet_for_max_fun}:
        Let $\bar{y} \in B_\eps(x)$ and $k \in A^e(\bar{y})$. By definition of $A^e(\bar{y})$ there is a sequence $(y^i)_i$ in the open set $\{ y \in \R^n : f(y) = f_{k}(y) \}^\circ \subseteq D^2$ with $y^i \rightarrow \bar{y}$. By Part 1 and continuity of $f_{k}$ and its derivatives, this shows that $(\bar{y},f_{k}(\bar{y}),\nabla f_{k}(\bar{y}), \nabla^2 f_{k}(\bar{y}))$ is an element of
        \begin{align*}
            \overline{\{ (y,f_{k}(y),\nabla f_{k}(y), \nabla^2 f_{k}(y)) : y \in B_\delta(x) \cap D^2, k \in A^e(y) \}}
            = \overline{T(\delta)}
        \end{align*}
        for all $\delta > \eps$ (since $y^i \in B_\delta(x)$ for $i$ large enough). Since by definition of $T(\delta)$ it holds $\bigcap_{\delta > \eps} \overline{T(\delta)} = \Jeps(x)$, this shows that ``$\supseteq$'' holds in \eqref{eq:jet_for_max_fun}.
    \end{proof}

    We conclude this part with a brief discussion on the computation of the $\eps$-jet for other classes of functions:
    \begin{remark}
        \begin{enumerate}[label=(\alph*)]
            \item It may be possible to weaken \ref{enum:A2_2} in Assumption \ref{assum:A2}. Its purpose is to assure that $\Omega^2$ is a null set for \ref{enum:A1_2}. By \cite{S2012}, Proposition 4.1.5, \ref{enum:A2_1} alone already implies that there is an open and dense subset of $\R^n$ on which $f$ is $C^2$. While this does not imply \ref{enum:A1_2}, a weaker condition than \ref{enum:A2_2} may be sufficient.
            \item For the Clarke subdifferential, the representation in terms of selection functions can be generalized to so-called \emph{lower-}$C^1$ functions (\cite{RW1998}, Definition 10.29): By \cite{RW1998}, Theorem 10.31, for $f(x) = \max_{t \in T} f_t(x)$, it holds
            \begin{align*}
                \partial f(x) = \conv \left( \left\{ \nabla f_t(x) : t \in \argmax_{t \in T} f_t(x) \right\} \right).
            \end{align*}
            Unfortunately, for \emph{lower-}$C^2$ (and thus \emph{lower-}$C^\infty$) functions, an analogous representation of the $\eps$-jet does not to exist. Consider, e.g., the function $f : \R^n \rightarrow\R$, $x \mapsto \| x \|$. Then $f(x) = \max_{t \in T} f_t(x)$ for $f_t(x) := t^\top x$ and $T := \{ y \in \R^n : \| y \| = 1 \}$. Clearly, $\nabla^2 f_t(x) = 0$ for all $t \in T$. However, $\nabla^2 f(x) \neq 0$ for all $x \neq 0$. 
        \end{enumerate}
    \end{remark}

\subsection{Second-order model} \label{subsec:second_order_model}

    In the following, we define and analyze a model for $f$ which is based on the second-order $\eps$-jet. For ease of notation, for $v \in \R^n$ and  $\calH \in \R^{n \times n}$, denote $q_\calH(v) := v^\top \calH v$. The idea is to assign to each element $J \in \Jeps(x)$ a quadratic polynomial via
    \begin{align} \label{eq:Jeps_polynomial}
        J = (y,f(y),\xi,\calH) \mapsto p_J(z) := f(y) + \xi^\top (z-y) + \frac{1}{2} q_\calH(z-y).
    \end{align}
    (If $f$ is $C^2$ around $y$, then this polynomial is simply the second-order Taylor polynomial of $f$ at $y$.) Then, similar to cutting-plane models, we define our model by taking the maximum of these polynomials. More formally, for $x \in \R^n$ and $\eps \geq 0$, we consider the function
    \begin{align} \label{eq:def_Txeps}
        \Txeps(z) 
        := \max_{J \in \Jeps(x)} p_J(z)
        = \max_{(y,f(y),\xi,\calH) \in \Jeps(x)} f(y) + \xi^\top (z-y) + \frac{1}{2} q_\calH(z - y).
    \end{align}
    Due to continuity of \eqref{eq:Jeps_polynomial} (for fixed $z \in \R^n$) and compactness of $\Jeps(x)$ (cf.\ Lemma \ref{lem:jet_properties}), $\Txeps$ is well-defined. Furthermore, it is locally Lipschitz (in fact, lower-$C^\infty$) by \cite{RW1998}, Theorem 10.31. For the remainder of this section, we analyze the modeling error between $\Txeps$ and $f$. To this end, denote
    \begin{align} \label{eq:def_R}
        \Rxeps(z) := \Txeps(z) - f(z).
    \end{align}
    Before we start, it is important to realize that for a function as in Assumption \ref{assum:A1}, we can only hope to obtain a small error in the closed $\eps$-ball in which we evaluate the function $f$ and its derivatives: No matter how we build our model, there could be nonsmooth points arbitrarily close to $B_\eps(x)$ in which $f$ abruptly changes its behavior, causing $|\Rxeps(z)|$ to grow at least linearly with respect to $\| z - x \|$. Thus, we will only derive error estimates for $z \in B_\eps(x)$. First of all, it is easy to see that $\Rxeps(z) \geq 0$, i.e., $\Txeps$ is an overestimate for $f$ on $B_\eps(x)$:
    \begin{lemma} \label{lem:Txeps_overestimate}
        Assume that $f$ satisfies Assumption \ref{assum:A1}. Let $x \in \R^n$ and $\eps \geq 0$. Then $\Rxeps(z) \geq 0$ for all $z \in B_\eps(x)$.
    \end{lemma}
    \vspace{-15pt}
    \begin{proof}
        Let $z \in B_\eps(x)$. By Lemma \ref{lem:jet_projections} there are $\xi_z \in \R^n$ and $\calH_z \in \R^{n \times n}$ such that $(z,f(z),\xi_z,\calH_z) \in \Jeps(x)$. By definition of $\Txeps(z)$, this implies
        \begin{align*}
            \Txeps(z) \geq f(z) + \xi_z^\top (z-z) + \frac{1}{2} q_{\calH_z}(z - z) = f(z).
        \end{align*}
    \end{proof}

    Clearly, when only imposing Assumption \ref{assum:A1}, the overestimation of $\Txeps$ may be quite severe: Consider, e.g., the function $f(x) = -|x|$. Then for $x = 0$ and any $\eps \geq 0$, it holds $\Txeps(z) = |z|$ and $\Txeps(z) - f(z) = 2 |z|$. So even a model that is constantly zero would be a better model for $f$ than $\Txeps$ in this case. Fortunately, however, for function classes that typically occur in optimization, the model satisfies better estimates for the error. First of all, consider the following class:

    \begin{assum} \label{assum:A3}
        Assume that $f$ is convex and satisfies Assumption \ref{assum:A1}.
    \end{assum}
    
    For such convex functions, we obtain the following result:

    \begin{lemma} \label{lem:error_convex}
        Assume that $f$ satisfies Assumption \ref{assum:A3}. Then for every bounded set $U \subseteq \R^n$ and every $\eps' > 0$, there is some $K > 0$ such that
        \begin{align*}
            \max_{z \in B_\eps(x)} \Rxeps(z) \leq K \eps^2 \quad \forall x \in U, \eps \in (0,\eps'].
        \end{align*}
    \end{lemma}
    \vspace{-15pt}
    \begin{proof}
        Let $U \subseteq \R^n$ be bounded and $\eps' > 0$. Then
        \begin{align*}
            \Txeps(z) - f(z)
            &= \max_{(y,f(y),\xi,\calH) \in \Jeps(x)} f(y) - f(z) + \xi^\top (z-y) + \frac{1}{2} q_\calH(z - y) \\
            &\leq \max_{(y,f(y),\xi,\calH) \in \Jeps(x)} \frac{1}{2} (z - y)^\top \calH (z - y) \\
            &\leq \max_{(y,f(y),\xi,\calH) \in \Jeps(x)} 2 \| \calH \| \eps^2
            \quad \forall x \in U, \eps \in (0,\eps'], z \in B_\eps(x),
        \end{align*}
        where $\| \calH \|$ denotes the Frobenius norm of $\calH$ and $f(y) - f(z) + \xi^\top (z-y) \leq 0$ follows from convexity of $f$. Since $U + B_{\eps'}(0)$ is bounded (with $+$ denoting the Minkowski sum of sets), there are $\bar{x} \in U$ and $\bar{\eps} > 0$ such that $U + B_{\eps'}(0) \subseteq B_{\bar{\eps}}(\bar{x})$. Compactness of $\mathcal{J}_{\bar{\eps}}^2 f(\bar{x})$ shows that $\| \calH \|$ in the above inequality is bounded, completing the proof.
    \end{proof}

    The proof of the previous lemma shows that if we would set $\calH = 0$ in the definition of $\Txeps$, then the model would exactly equal $f$ on $B_\eps(x)$ if $f$ is convex. This is no surprise, as this property is well-known for cutting-plane models. So in a sense, adding second-order information is actually a hindrance in the convex case. However, this property requires the entire $\eps$-jet to be available, which is only the case theory. In practice, we will merely be able to consider a finite subset of $\Jeps(x)$ corresponding to a finite number of sample points from $B_\eps(x)$. In this case, when compared to a cutting-plane model, the second-order information gives us the chance to have a smaller error at points where we did not sample an element of the $\eps$-jet.

    The second class of functions we consider, which is often encountered in nonsmooth optimization, is the class of max-type functions:

    \begin{assum} \label{assum:A4}
        Assume that $f$ satisfies Assumption \ref{assum:A2} with all $f_k$ being $C^3$ and
        \begin{align*}
            f(x) = \max_{k \in I} f_k(x) \quad \forall x \in \R^n.
        \end{align*}
    \end{assum}

    Since functions satisfying Assumption \ref{assum:A4} behave like $C^3$ functions on certain open subsets of $\R^n$, we can exploit the error estimate we get from the second-order Taylor expansion there, giving us the following result:

    \begin{lemma} \label{lem:error_max_fun}
        Assume that $f$ satisfies Assumption \ref{assum:A4}. Then for every bounded set $U \subseteq \R^n$ and every $\eps' > 0$, there is some $K > 0$ such that 
        \begin{align*}
            \max_{z \in B_\eps(x)} \Rxeps(z)
            \leq K \eps^{3}
            \quad \forall x \in U, \eps \in (0,\eps'].
        \end{align*}
    \end{lemma}
    \vspace{-15pt}
    \begin{proof}
        Let $U \subseteq \R^n$ be bounded and $\eps' > 0$. Assume w.l.o.g.\ that $U$ is convex. \\
        \noindent\textbf{Part 1:} Let $k \in I$ and $y \in U + B_{\eps'}(0)$. Denote 
        \begin{align*}
            J_{k,y} := (y, f_k(y), \nabla f_k(y), \nabla^2 f_k(y)).
        \end{align*}
        Taylor's theorem (with higher-order derivatives in the notation of \cite{K2004}, p.\ 65) applied to $f_k$ shows that for any $z \in U + B_{\eps'}(0)$ there is some $a \in \conv(\{y,z\})$ such that
        \begin{align*}
            f_k(z) - p_{J_{k,y}}(z)
            = \frac{1}{6} \deriv^{(3)} f_k(a)(z - y)^3
        \end{align*}
        with $p_{J_{k,y}}$ as in \eqref{eq:Jeps_polynomial}. By continuity of $\deriv^{(3)} f_k$, finiteness of $|I|$ and boundedness of $U + B_{\eps'}(0)$, there is an upper bound for the right-hand side of this equality that does not depend on $k$ and $a$. More formally, there is some $K' > 0$ such that
        \begin{align} \label{eq:proof_lem_Taylor_max_fun_1}
            |f_k(z) - p_{J_{k,y}}(z)| 
            \leq \frac{K'}{6} \| z - y \|^3 \quad \forall y, z \in U + B_{\eps'}(0), k \in I.
        \end{align}
        Let $x \in U$ and $\eps \in (0,\eps']$. Then \eqref{eq:proof_lem_Taylor_max_fun_1} implies
        \begin{align} \label{eq:proof_lem_Taylor_max_fun_2}
            |f_k(z) - p_{J_{k,y}}(z)| \leq K \eps^3 \quad \forall y, z \in B_\eps(x), k \in I
        \end{align}
        for $K := (4 K')/3$, since $\| z - y \| \leq 2 \eps$. \\
        \textbf{Part 2:} Let $x \in U$,  $\eps \in (0,\eps']$ and $z \in B_\eps(x)$. By Lemma \ref{lem:jet_for_max_fun} it holds $\Txeps(z) = p_{J_{\bar{k},\bar{y}}}(z)$ for some $\bar{y} \in B_\eps(x)$, $\bar{k} \in A^e(\bar{y})$. Now $f(z) = \max_{k \in I} f_k(z) \geq f_{\bar{k}}(z)$ and \eqref{eq:proof_lem_Taylor_max_fun_2} imply that
        \begin{align*}
            \Rxeps(z)
            = \Txeps(z) - f(z) 
            = p_{J_{\bar{k},\bar{y}}}(z) - f(z) 
            \leq p_{J_{\bar{k},\bar{y}}}(z) - f_{\bar{k}}(z) 
            \leq K \eps^3,
        \end{align*}
        completing the proof.
    \end{proof}

    It is important to note that for a cutting-plane model (where $\calH = 0$ in \eqref{eq:def_Txeps}), the cubic error estimate in the previous lemma does not hold. (Consider, e.g., $f(x) = -x^2$ with $x = 0$.) As such, this result quantifies the benefit of using second-order information. We conclude this section with some remarks on the model $\Txeps$:
    
    \begin{remark}
        \begin{enumerate}[label=(\alph*)]
            \item It is worth pointing out that $\Txeps$ is not a first- or second-order model in the sense of \cite{N2010}, Definition 1 and 3 (with $\phi(\cdot,x) = \Txeps(\cdot)$), as in general, $\Txeps$ may be nonconvex and we may have $\Txeps(x) \neq f(x)$ and $\partial \Txeps \nsubseteq \partial f(x)$.
            \item Considering the proof of Lemma \ref{lem:error_max_fun}, it appears that it could be generalized to $\Txeps$ being the maximum of Taylor expansions of any order $q \in \N$, resulting in the upper bound $K \eps^{q+1}$ for the error.
        \end{enumerate}
    \end{remark}

\section{Descent method} \label{sec:descent_method}

In this section, we construct a descent method for minimizing $f$ based on minimizing the second-order model $\Txeps$ from the previous section. We begin by deriving an abstract version of this method (Algo.\ \ref{algo:abstract_descent_method}) in Section \ref{subsec:abstract_algorithm}, for which we assume that the entire second-order $\eps$-jet is available at each $x \in \R^n$. We prove convergence to critical points for the case where $f$ is convex or of max-type. Since the entire $\eps$-jet is not available in practice, we then construct an approximation scheme for $\Jeps(x)$ (Algo.\ \ref{algo:approx_jet}) in Section \ref{subsec:approximating_eps_jet} that only requires a single element of $\J_0^2 f(y)$ at every $y \in B_\eps(x)$ and prove its termination. In Section \ref{subsec:practical_algorithm}, we insert this scheme into the abstract algorithm, yielding the practical Algo.\ \ref{algo:practical_descent_method}, for which we then prove the same convergence results as for the abstract method.

\subsection{Abstract algorithm} \label{subsec:abstract_algorithm}

    The idea of our method is to minimize $f$ by iteratively minimizing the model $\Txeps$ from \eqref{eq:def_Txeps} for varying $x \in \R^n$ and $\eps > 0$. However, since $\Txeps$ may be unbounded below in case $f$ is nonconvex, we cannot minimize it over $\R^n$. Instead, as we can only guarantee a certain quality of $\Txeps$ on $B_\eps(x)$ (cf.\ Section \ref{subsec:second_order_model}), we consider the subproblem
    \begin{align} \label{eq:min_Txeps}
        \min_{z \in B_\eps(x)} \Txeps(z) = \min_{z \in B_\eps(x)} \max_{(y,f(y),\xi,\calH) \in \Jeps(x)} f(y) + \xi^\top (z-y) + \frac{1}{2} q_\calH(z - y).
    \end{align}
    This problem is well-defined since $\Txeps$ is continuous. Let
    \begin{align*}
        \bar{z}(x,\eps) \in \argmin_{z \in B_\eps(x)} \Txeps(z)
    \end{align*}
    be an arbitrary solution of \eqref{eq:min_Txeps} and let $\theta(x,\eps) = \Txeps(\bar{z}(x,\eps))$ be the optimal value. For the sake of readability, we will omit the dependency of $\theta(x,\eps)$ and $\bar{z}(x,\eps)$ on $x$ and $\eps$ whenever the context allows it.
    
    The following lemma yields an estimate for how much decrease (if any) we can expect when moving from $x$ to $\bar{z}(x,\eps)$. For a compact set $S \subseteq \R^n$ denote $\min(\| S \|) := \min_{s \in S} \| s \|$.

    \begin{lemma} \label{lem:model_descent_estimate}
        Assume that $f$ satisfies Assumption \ref{assum:A1}. Let $x \in \R^n$ and $\eps \geq 0$.
        \begin{enumerate}[label=(\alph*)]
            \item It holds $f(\bar{z}) \leq \theta$.
            \item Let $z^* \in \argmin_{z \in B_\eps(x)} f(z)$. Then 
                \begin{align*}
                    \theta - f(x) 
                    \leq - \eps \min(\| \partial_\eps f(x) \|) + R_{x,\eps}(z^*).
                \end{align*}
        \end{enumerate}
    \end{lemma}
    \vspace{-5pt}
    \begin{proof}
        (a) By Lemma \ref{lem:Txeps_overestimate} it holds $f(\bar{z}) \leq \Txeps(\bar{z}) = \theta$. \\
        (b) Let $\bar{v} := -\argmin_{\xi \in \partial_\eps f(x)} \| \xi \|$. Then
        \begin{align*}
            \theta 
            = \Txeps(\bar{z}) 
            \leq \Txeps(z^*) 
            = f(z^*) + R_{x,\eps}(z^*)
            \leq f \left( x + \eps \frac{\bar{v}}{\| \bar{v} \|} \right) + R_{x,\eps}(z^*).
        \end{align*}
        Using \eqref{eq:eps_subdiff_decrease} (for $t = \eps / \| \bar{v} \|$) we obtain
        \begin{align*}
            \theta - f(x)  
            \leq - \eps \| \bar{v} \| + R_{x,\eps}(z^*), 
        \end{align*}
        which completes the proof.
    \end{proof}

    Considering the estimates for $\Rxeps$ in Lemma \ref{lem:error_convex} and Lemma \ref{lem:error_max_fun}, this leads to the following result:

    \begin{lemma} \label{lem:model_critical}
        Assume that $f$ satisfies Assumption \ref{assum:A1} and that there are $K > 0$, $U \subseteq \R^n$ and $\eps' > 0$ such that $\Rxeps(z) \leq K \eps^2$ for all $x \in U$, $z \in B_\eps(x)$, $\eps \in (0,\eps']$.
        \begin{enumerate}[label=(\alph*)]
            \item Let $x \in U$. Then
                \begin{align*}
                    \limsup_{\eps \searrow 0} \frac{\theta(x,\eps) - f(x)}{\eps} \leq -\min(\| \partial f(x) \|).
                \end{align*}
            \item If there are $(x^j)_j \in U$ and $(\eps_j)_j \in (0,\eps']$ such that $x^j \rightarrow x^* \in U$, $\eps_j \rightarrow 0$ and
                \begin{align*}
                    \liminf_{j \rightarrow \infty} \frac{\theta(x^j,\eps_j) - f(x^j)}{\eps_j} \geq 0,
                \end{align*}
                then $x^*$ is a critical point of $f$.
        \end{enumerate}
    \end{lemma}
    \vspace{-10pt}
    \begin{proof}
        (a) Let $x \in U$. By Lemma \ref{lem:model_descent_estimate} we have
        \begin{align*}
            \frac{\theta(x,\eps) - f(x)}{\eps} 
            &\leq -\min(\| \partial_\eps f(x) \|) + \frac{R_{x,\eps}(z^*)}{\eps} \\
            &\leq -\min(\| \partial_\eps f(x) \|) + K \eps
            \quad \forall \eps \in (0,\eps'].
        \end{align*}
        For $\eps \searrow 0$, by \cite{C1990}, Proposition 2.1.5(b), the cluster points of any sequence of convex combinations of elements of $\cup_{y \in B_\eps(x)} \partial f(y)$ lie in $\partial f(x)$. Combined with the fact that $\min(\| \partial_\eps f(x) \|) \leq \min(\| \partial f(x) \|)$ for all $\eps > 0$, we obtain
        \begin{align*}
            \lim_{\eps \searrow 0} \min(\| \partial_\eps f(x) \|) = \min(\| \partial f(x) \|),
        \end{align*}
        which completes the proof.\\
        (b) Analogous to (a) we have
        \begin{align*}
            \frac{\theta(x^j,\eps_j) - f(x^j)}{\eps_j}
            \leq -\min(\| \partial_{\eps_j} f(x^j) \|) + K \eps_j
            \quad \forall j \in \N.
        \end{align*}
        Again by \cite{C1990}, Proposition 2.1.5(b), for $j \rightarrow \infty$, the cluster points of any sequence of convex combinations of elements of $\cup_{y \in B_{\eps_j}(x^j)} \partial f(y)$ lie in $\partial f(x^*)$. (Note that in contrast to (a), the $x$ also varies now.) This yields 
        \begin{align*}
            \liminf_{j \rightarrow \infty} \left( -\min(\| \partial_{\eps_j} f(x^j) \|) \right)
            = - \limsup_{j \rightarrow \infty} \min(\| \partial_{\eps_j} f(x^j) \|)
            \leq -\min(\| \partial f(x^*) \|).
        \end{align*}
        Thus, we obtain
        \begin{align*}
            0 
            \leq \liminf_{j \rightarrow \infty} \frac{\theta(x^j,\eps_j) - f(x^j)}{\eps_j}
            \leq -\min(\| \partial f(x^*) \|),
        \end{align*}
        so $\min(\| \partial f(x^*) \|) = 0$, which completes the proof.
    \end{proof}

    Combination of Lemma \ref{lem:error_convex} and Lemma \ref{lem:error_max_fun} with Lemma \ref{lem:model_descent_estimate}(a) and Lemma \ref{lem:model_critical}(a) shows that if $f$ is convex or of max-type and $x$ is not a critical point, then $f(\bar{z})$ is smaller than $f(x)$ for $\eps > 0$ small enough. In other words, as long as $x$ is not already critical, we can decrease the value of $f$ by solving the subproblem \eqref{eq:min_Txeps} for $\eps$ small enough. This motivates the following strategy for our descent method: Given an initial point $x^0$ and sequences $(\eps_j)_j, (\tau_j)_j \in \R^{>0}$ with $\eps_j \rightarrow 0$ and $\tau_j \rightarrow 0$, we first generate a sequence $(x^{1,i})_i$ via $x^{1,0} = x^0$ and
    \begin{align*}
        x^{1,i+1} = \bar{z}(x^{1,i},\eps_1) \quad \forall i \in \N \cup \{ 0 \}.
    \end{align*}
    Then by Lemma \ref{lem:model_descent_estimate}(a),
    \begin{align*}
        f(x^{1,i+1}) - f(x^{1,i}) \leq \theta(x^{1,i},\eps_1) - f(x^{1,i}) \quad \forall i \in \N \cup \{ 0 \}.
    \end{align*}
    If $f$ is bounded below, then the right-hand side of this inequality must eventually be arbitrarily close to $0$ or positive. In particular, there must be some $i' \in \N \cup \{ 0 \}$ with
    \begin{align*}
        \frac{\theta(x^{1,i'},\eps_1) - f(x^{1,i'})}{\eps_1} > -\tau_1,
    \end{align*}
    which means that the predicted decrease is less than $\eps_1 \tau_1$. When this occurs we choose the next (potentially smaller) tolerances $\eps_2$ and $\tau_2$ and start a new sequence $(x^{2,i})_i$ via $x^{2,0} = x^{1,i'}$ and $x^{2,i+1} = \bar{z}(x^{2,i},\eps_2)$ and so on. The resulting method is Algo.\ \ref{algo:abstract_descent_method}.

    \begin{algorithm} 
        \caption{Abstract descent method}
        \label{algo:abstract_descent_method}
       	\begin{algorithmic}[1] 
            \Require Initial point $x^0 \in \R^n$, vanishing sequences $(\eps_j)_j$, $(\tau_j)_j \in \R^{>0}$.
       		\State Initialize $j = 1$, $i = 0$ and $x^{1,0} = x^0$.
       		\State Compute $\bar{z}(x^{j,i},\eps_j)$ and $\theta(x^{j,i},\eps_j)$. \Comment{Solve subproblem}
            \If{$(\theta(x^{j,i},\eps_j) - f(x^{j,i})) / \eps_j > -\tau_j$} \Comment{Check decrease}
                \State Set $x^{j+1,0} = x^{j,i}$, $j = j+1$ and $i = 0$. \Comment{Change tolerances}
            \Else
                \State Set $x^{j,i+1} = \bar{z}(x^{j,i},\eps_j)$ and $i = i+1$. \Comment{Descent step}
            \EndIf
            \State Go to Step 2.
        \end{algorithmic}
    \end{algorithm}	

    For $j \in \N$ let $N_j \in \N \cup \{ 0, \infty \}$ be the final $i$ in Step 4 of Algo.\ \ref{algo:abstract_descent_method}, i.e., $N_j$ is the number of descent steps that are performed for each fixed $j$. For ease of notation let
    \begin{align} \label{eq:def_xj}
        x^j := x^{j,N_j} = x^{j+1,0} \quad \forall j \in \N.    
    \end{align}
    (If $N_j = \infty$ for some $j \in \N$, then $(x^j)_j$ is a finite sequence.) Furthermore, denote by $(\hat{x}^l)_l$ the entire sequence that is generated, i.e.,
    \begin{align} \label{eq:def_xhatl}
        (\hat{x}^l)_l = (x^{1,0},x^{1,1},\dots,x^{1,N_1},x^{2,0},x^{2,1},\dots,x^{2,N_2},\dots).
    \end{align}
    By construction we have $f(\hat{x}^{l+1}) \leq f(\hat{x}^l)$ for all $l \in \N$. From Lemma \ref{lem:model_descent_estimate} and Lemma \ref{lem:model_critical}, we obtain the following convergence result:
    
    \begin{theorem} \label{thm:algo1_convergence}
        Assume that $f$ satisfies Assumption \ref{assum:A3} or Assumption \ref{assum:A4}. Let $(x^j)_j$ and $(\hat{x}^j)_j$ be the sequences generated by Algo.\ \ref{algo:abstract_descent_method}.
        If the level set $\{ y \in \R^n : f(y) \leq f(x^0) \}$ is bounded, then $(x^j)_j$ has an accumulation point $x^* \in \R^n$ and all accumulation points of $(x^j)_j$ are critical. Furthermore, $f(\hat{x}^l) \rightarrow f(x^*)$.
    \end{theorem}
    \vspace{-15pt}
    \begin{proof}
        Let $U := \{ y \in \R^n : f(y) \leq f(x^0) \}$. Since $(f(x^j))_j$ is non-increasing we have $(x^j)_j \in U$. Boundedness of $U$ and continuity of $f$ imply compactness of $U$, so $(x^j)_j$ must have an accumulation point $x^* \in U$. Since $(f(\hat{x}^l))_l$ is non-increasing we must have $f(\hat{x}^l) \rightarrow f(x^*)$. By Lemma \ref{lem:model_descent_estimate}(a) and the condition in Step 3, for all descent steps we have
        \begin{align*}
            f(x^{j,i+1}) - f(x^{j,i})
            \leq \theta^{j,i} - f(x^{j,i})
            \leq - \eps_j \tau_j \quad \forall j \in \N, i \in \{0,\dots,N_j-1\}.
        \end{align*}
        Thus, since $f$ is bounded below in $U$, all $N_j$ have to be finite. This means that the condition in Step 3 has to hold infinitely many times, i.e.,
        \begin{align*}
            -\tau_j
            < \frac{\theta(x^{j,N_j},\eps_j) - f(x^{j,N_j})}{\eps_j}
            &= \frac{\theta(x^j,\eps_j) - f(x^j)}{\eps_j}
            \quad \forall j \in \N.
        \end{align*}
        Since $\tau_j \rightarrow 0$, this implies
        \begin{align*}
            \liminf_{j \rightarrow \infty} \frac{\theta(x^j,\eps_j) - f(x^j)}{\eps_j}
            \geq 0.
        \end{align*}
        By assumption, we can apply Lemma \ref{lem:error_convex} or Lemma \ref{lem:error_max_fun} to see that there are $K > 0$ and $\eps' > 0$ with $\Rxeps(z) \leq K \eps^2$ for all $x \in U$, $z \in B_\eps(x)$, $\eps \in (0,\eps']$. Application of Lemma \ref{lem:model_critical}(b) completes the proof.
    \end{proof}

    We conclude the discussion of Algo.\ \ref{algo:abstract_descent_method} with the following remark:
    \begin{remark}
        \begin{enumerate}[label=(\alph*)]
            \item For convex quadratic functions, Newton's method finds a minimum in a single iteration. Analogously, Algo.\ \ref{algo:abstract_descent_method} finds a minimum (if one exists) in a single iteration for piecewise-quadratic max-type functions, i.e., functions that satisfy Assumption \ref{assum:A4} with quadratic selection functions $f_k$, $k \in I$ (if $\eps_1$ is large enough for $B_{\eps_1}(x^0)$ to contain a minimum).
            \item In the subproblem \eqref{eq:min_Txeps}, $B_\eps(x)$ could be interpreted as a ``trust-region'' for the model $\Txeps$, and Algo.\ \ref{algo:abstract_descent_method} could be regarded as a type of \emph{trust-region method} \cite{CGT2000}. The main differences to a standard trust-region method (see, e.g., Algorithm 6.1.1 in \cite{CGT2000}) are the acceptance criterion in Step 3 and the fact that in Algo.\ \ref{algo:abstract_descent_method}, there is no mechanism for increasing the ``trust-region radius'' $\eps$ in case $\Txeps$ is a sufficiently good model in $B_\eps(x)$.
        \end{enumerate}
    \end{remark}

\subsection{Approximating the \texorpdfstring{\boldmath{$\eps$}}{epsilon}-jet} \label{subsec:approximating_eps_jet}

    The assumption in the previous section that we are able to evaluate the entire second-order $\eps$-jet means that Algo.\ \ref{algo:abstract_descent_method} is not implementable in practice. For first-order information, the standard oracle assumption for the Clarke subdifferential $\partial f$ is that at every $x \in \R^n$, we are able to compute a single, arbitrary element of $\partial f(x) = \partial_0 f(x)$ (cf.\ \cite{L1989}, (1.10)). As such, for the $\eps$-jet, we make the analogous assumption that we can compute a single, arbitrary element of $\J_0^2 f(x)$ at every $x \in \R^n$. (Due to \ref{enum:A1_2}, for almost all $x \in \R^n$, this means that we have to be able to compute the objective value, the gradient and the Hessian at $x$.) In the following, we show how this assumption allows us to obtain a finite approximation of $\Jeps(x)$ (for $\eps > 0$) that is good enough to be used instead of $\Jeps(x)$ in Algo.\ \ref{algo:abstract_descent_method}.

    For $x \in \R^n$ and $\eps > 0$ let $W \subseteq \Jeps(x)$ be a finite subset. Consider the model
    \begin{align} \label{eq:def_TxepsW}
        \Txeps^W(z) := \max_{(y,f(y),\xi,\calH) \in W} f(y) + \xi^\top (z-y) + \frac{1}{2} q_\calH(z - y)
    \end{align}
    and the subproblem
    \begin{align} \label{eq:min_TxepsW}
        \min_{z \in B_\eps(x)} \Txeps^W(z).
    \end{align}
    Let $\bar{z}^W(x,\eps) \in \argmin_{z \in B_\eps(x)} \Txeps^W(z)$ be a solution and $\theta^W(x,\eps) = \Txeps^W(\bar{z}^W(x,\eps))$ be the optimal value of this subproblem. (As before, we will omit the dependency on $x$ and $\eps$ whenever the context allows it.)

    \begin{remark}
        The subproblem \eqref{eq:min_TxepsW} has the equivalent epigraph formulation
        \begin{equation} \label{eq:min_TxepsW_quad}
            \begin{aligned}
                \min_{z \in \R^n, \beta \in \R} \ & \beta \\
                \text{s.t.} \ & f(y) + \xi^\top (z-y) + \frac{1}{2} (z - y)^\top \calH (z - y) \leq \beta   \quad \forall (y,f(y),\xi,\calH) \in W,\\
                & \| z - x \|^2 \leq \eps^2,
            \end{aligned}
        \end{equation}
        which has a linear objective and (possibly nonconvex) quadratic constraints. In contrast to the subproblems that occur in the bundle method (Problem (12.3) in \cite{BKM2014}) or the gradient sampling method (Problem (6.5) in \cite{BCL2020}), \eqref{eq:min_TxepsW_quad} is not a quadratic problem. Instead, it belongs to the more general class of nonconvex quadratically constrained quadratic programs (\cite{BV2004}, Section 4.4).
    \end{remark}

    By definition of $\Jeps(x)$ we have $\J_0^2 f(y) \subseteq \Jeps(x)$ for all $y \in B_\eps(x)$. Thus, we could take the route of classic gradient sampling (cf.\ \cite{BLO2005,BCL2020}) and simply approximate $\Jeps(x)$ by generating random points $y \in B_\eps(x)$ and evaluating $\J_0^2 f(y)$. However, even for first-order information, it is known that random sampling requires a large number of samples to robustly approximate the $\eps$-subdifferential. Since in our case, we also have to evaluate second-order derivatives in every sample point, we want to keep the number of sample points as low as possible. Therefore, we will instead take a deterministic approach inspired by \cite{MY2012,GP2021} that tries to only generate sample points that are actually relevant. (Additionally, this means that we do not have to rely on stochastic arguments when analyzing the convergence of the resulting method.)

    To this end, assume that $x \in \R^n$, $\eps > 0$ and $\tau > 0$ are given. The idea is to start with an initial approximation $W \subseteq \Jeps(x)$ consisting of finitely many elements 
    (e.g., $W = \{ (x,f(x),\xi,\calH) \} \subseteq \J_0^2 f(x)$)
    and to then iteratively add more elements to $W$ until it is a sufficient approximation of $\Jeps(x)$. In the following, we define what it means for $W$ to be a ``sufficient'' approximation and construct a way to add elements to $W$ that improve the approximation quality of $\Txeps^W$ if it is insufficient.
    First of all, by definition of $\theta$ and $\theta^W$, we have $\theta \geq \theta^W$ for any $W \subseteq \Jeps(x)$. Therefore, if
    \begin{align} \label{eq:thetaW_stop1}
        \frac{\theta^W - f(x)}{\eps} > -\tau,
    \end{align}
    then also the condition in Step 3 of Algo.\ \ref{algo:abstract_descent_method} holds and we can stop the approximation scheme (and increase $j$). Otherwise, we know that
    \begin{align} \label{eq:thetaW_decrease}
        \theta^W - f(x) \leq -\tau \eps.
    \end{align}
    For the exact $\theta$, this inequality would imply a decrease in the objective value by Lemma \ref{lem:model_descent_estimate}(a). However, Lemma \ref{lem:model_descent_estimate}(a) does in general not apply to the approximation $\theta^W$, since its proof is based on having $\pr_1(\Jeps(x)) = B_\eps(x)$. Due to this, we have to manually check whether we have decrease in the objective value when moving from $x$ to $\bar{z}^W$. To this end, we weaken the inequality in Lemma \ref{lem:model_descent_estimate}(a) to
    \begin{align} \label{eq:thetaW_stop2}
        f(\bar{z}^W) \leq c \theta^W + (1 - c) f(x)
    \end{align}
    for $c \in (0,1)$. Combined with \eqref{eq:thetaW_decrease}, this leads to
    \begin{align} \label{eq:fzW_decrease}
        f(\bar{z}^W) 
        \leq c (f(x) - \tau \eps) + (1 - c) f(x)
        = f(x) - c \tau \eps.
    \end{align}
    In words, if \eqref{eq:thetaW_stop1} is violated, then \eqref{eq:thetaW_stop2} implies that we have at least $c$ percent of the decrease we could expect when using the full $\eps$-jet. Therefore, we consider a subset $W \subseteq \Jeps(x)$ to be a sufficient approximation if \eqref{eq:thetaW_stop1} or \eqref{eq:thetaW_stop2} hold. For adding new elements to $W$ in case it is not sufficient, consider the following lemma:
    \begin{lemma} \label{lem:new_jet_element}
        Assume that $f$ satisfies Assumption \ref{assum:A1}. Let $x \in \R^n$, $\eps > 0$, $W \subseteq \Jeps(x)$ and $c \in (0,1)$. If both \eqref{eq:thetaW_stop1} and \eqref{eq:thetaW_stop2} are violated, then $\bar{z}^W \notin \pr_1(W)$.
    \end{lemma}
    \vspace{-15pt}
    \begin{proof}
        Inequality \eqref{eq:thetaW_stop1} being violated implies that $f(x) \geq \theta^W + \tau \eps$. With \eqref{eq:thetaW_stop2} being violated as well, it follows that
        \begin{equation} \label{eq:proof_new_jet_element}
            \begin{aligned}
                f(\bar{z}^W)
                &> c \theta^W + (1 - c) f(x)
                \geq c \theta^W + (1 - c)(\theta^W + \tau \eps) \\
                &= \theta^W + (1 - c) \tau \eps
                = \TWxeps(\bar{z}^W) + (1 - c) \tau \eps.
            \end{aligned}
        \end{equation}
        Since by definition of $\TWxeps$ it holds $\TWxeps(y) \geq f(y)$ for all $y \in \pr_1(W)$, this completes the proof. 
    \end{proof}

    By the previous lemma, in case $W$ is insufficient, an element of $\Jeps(x) \setminus W$ can be computed by evaluating $\J_0^2 f$ in a solution $\bar{z}^W$ of the subproblem \eqref{eq:min_TxepsW}.
    \begin{algorithm} 
        \caption{Sampling scheme for second-order $\eps$-jet}
        \label{algo:approx_jet}
        \begin{algorithmic}[1] 
        	\Require Point $x \in \R^n$, approximation parameter $c \in (0,1)$, radius $\eps > 0$, improvement tolerance $\tau > 0$.
        	\State Sample $J_{x} = (x,f(x),\xi_{x},\calH_{x}) \in \J_0^2 f(x)$ and set $W = \{ J_{x} \}$.
        	\State Compute $\theta^W = \theta^W(x,\eps)$ and $\bar{z}^W = \bar{z}^W(x,\eps)$ via \eqref{eq:min_TxepsW} (or \eqref{eq:min_TxepsW_quad}).
            \If{$(\theta^W - f(x)) / \eps > -\tau$ or $f(\bar{z}^W) \leq c \theta^W + (1 - c) f(x)$}
                \State Stop.
            \EndIf
            \State Sample $J_{\bar{z}^W} = (\bar{z}^W,f(\bar{z}^W),\xi_{\bar{z}^W},\calH_{\bar{z}^W}) \in \J_0^2 f(\bar{z}^W)$, set $W = W \cup \{ J_{\bar{z}^W} \}$ and go to Step 2.
        \end{algorithmic} 
    \end{algorithm}
    The above considerations motivate Algo.\ \ref{algo:approx_jet} for the approximation of $\Jeps(x)$. By construction, the method may only stop when one of the inequalities in Step 3 holds. If the first inequality holds, then $\eps$ and $\tau$ have to be changed, and if the second inequality holds (and the first one does not hold), then $\bar{z}^W$ yields sufficient decrease. The following theorem shows that one of these inequalities must hold after a finite number of iterations, i.e., it shows that the algorithm terminates:

    \begin{theorem} \label{thm:algo2_termination}
        Assume that $f$ satisfies Assumption \ref{assum:A1}. Then Algo.\ \ref{algo:approx_jet} terminates.
    \end{theorem}
    \vspace{-15pt}
    \begin{proof}
        The idea is to show that $\bar{z}^W$ computed in Step 2 has a distance $r > 0$ to all points in $\pr_1(W)$ which does not depend on $\bar{z}^W$ or $W$. By compactness of $B_\eps(x)$, this shows that only finitely many of such $\bar{z}^W$ can be added to $W$, so the algorithm has to terminate.\\
        \textbf{Part 1:} Assume that neither of the conditions in Step 3 hold. Then, as in the proof of Lemma \ref{lem:new_jet_element}, we have $f(\bar{z}^W) > \Txeps^W(\bar{z}^W) + (1 - c) \tau \eps$ which is equivalent to
        \begin{align} \label{eq:proof_thm_algo2_termination}
            \bar{z}^W \in \{ z \in B_\eps(x) : f(z) - \Txeps^W(z) > (1-c) \tau \eps \}.
        \end{align}
        Furthermore $f(z') - \Txeps^W(z') \leq 0$ for all $z' \in \pr_1(W)$. \\
        \textbf{Part 2:} By \cite{RW1998}, Theorem 9.2 and Theorem 10.31, $\Txeps^W$ is Lipschitz continuous on $B_\eps(x)$, and a Lipschitz constant $L'$ is given by
        \begin{align*}
            L'
            := \sup_{z \in B_\eps(x)} \max_{J \in \Jeps(x)} \| \nabla p_J(z) \|
            \geq \sup_{z \in B_\eps(x)} \max_{J \in W} \| \nabla p_J(z) \|,
        \end{align*}
        where $\nabla p_J(z) = \xi + \frac{1}{2}(\calH + \calH^\top) (z - y)$ for $J = (y,f(y),\xi,\calH)$. Note that $L'$ does not depend on $W$ and that it is finite by boundedness of $\Jeps(x)$.\\
        \textbf{Part 3:} Let $L$ be a Lipschitz constant of $f$ on $B_\eps(x)$. Then $L + L'$ is a Lipschitz constant of $z \mapsto f(z) - \Txeps^W(z)$ on $B_\eps(x)$. Let $r := ((1 - c) \tau \eps)/(L + L')$.
        Then by \eqref{eq:proof_thm_algo2_termination}, we have
        \begin{align*}
            (1 - c) \tau \eps - (f(z) - \Txeps^W(z))
            &< f(\bar{z}^W) - \Txeps^W(\bar{z}^W) - (f(z) - \Txeps^W(z)) \\
            &\leq (L + L') \| \bar{z}^W - z \| 
            \leq (1 - c) \tau \eps
        \end{align*}
        for all $z \in B_r(\bar{z}^W) \cap B_\eps(x)$. In particular, 
        \begin{align*}
            f(z) - \Txeps^W(z) > 0 \quad \forall z \in B_r(\bar{z}^W) \cap B_\eps(x),
        \end{align*}
        which implies that $B_r(\bar{z}^W) \cap \pr_1(W) = \emptyset$ by Part 1. In other words, $\bar{z}^W$ has a distance of at least $r$ to all previous sample points and $r$ does not depend on $\bar{z}^W$ and $W$. \\
        \textbf{Part 4:} If the algorithm would not terminate, then there would be an infinite sequence of points $\bar{z}^W$ added to $W$ in Step 6. By compactness of $B_\eps(x)$, this sequence would have an accumulation point. This contradicts Part 3, which completes the proof.
    \end{proof}

    We conclude the discussion of Algo.\ \ref{algo:approx_jet} with a remark on the initialization in Step 1:
    \begin{remark}\label{rem:initial_W}
        \begin{enumerate}[label=(\alph*)]
            \item If $x$ is a point around which $f$ is $C^2$, then the initial approximation in Step 1 is $W = \{ (x,f(x),\nabla f(x),\nabla^2 f(x)) \}$, so the first subproblem \eqref{eq:min_TxepsW} that is solved in Step 2 is the classic subproblem from the trust-region Newton method (cf.\ \cite{NW2006}, Chapter 4).
            \item For the proof of Theorem \ref{thm:algo2_termination}, it does not matter how $W$ is initialized. As such, elements of the current $\eps$-jet that were already sampled in previous iterations may be reused by including them in the initial $W$. (For example, by construction, the $\eps$-ball of each iterate contains the preceding iterate, so at least the information sampled at the preceding iterate can be reused.) This modification introduces bundle-like behavior into our approach and will be used for our numerical experiments in Section \ref{sec:numerical_experiments}.
        \end{enumerate}
    \end{remark}
    
\subsection{Practical algorithm} \label{subsec:practical_algorithm}

    By using Algo.\ \ref{algo:approx_jet} for approximating the $\eps$-jet in Algo.\ \ref{algo:abstract_descent_method}, we obtain Algo.\ \ref{algo:practical_descent_method}.
    \begin{algorithm} 
        \caption{Practical descent method}
        \label{algo:practical_descent_method}
   	    \begin{algorithmic}[1] 
            \Require Initial point $x^0 \in \R^n$, vanishing sequences $(\eps_j)_j$, $(\tau_j)_j \in \R^{>0}$, approximation parameter $c \in (0,1)$.
   	        \State Initialize $j = 1$, $i = 0$ and $x^{1,0} = x^0$.
   	        \State Compute $\theta^{j,i} = \theta^W(x^{j,i},\eps_j)$ and $\bar{z}^{j,i} = \bar{z}^W(x^{j,i},\eps)$ via Algo.\ \ref{algo:approx_jet}.
            \If{$(\theta^{j,i} - f(x^{j,i})) / \eps_j > -\tau_j$} \Comment{Check decrease}
                \State Set $x^{j+1,0} = x^{j,i}$, $j = j+1$ and $i = 0$. \Comment{Change tolerances}
            \Else
                \State Set $x^{j,i+1} = \bar{z}^{j,i}$ and $i = i+1$. \Comment{Descent step}
            \EndIf
            \State Go to Step 2.
        \end{algorithmic}
    \end{algorithm}	
    In contrast to Algo.\ \ref{algo:abstract_descent_method}, which required the entire $\eps$-jet at every $x \in \R^n$, Algo.\ \ref{algo:practical_descent_method} only requires a single, arbitrary element of $\J_0^2 f(x)$ at every $x \in \R^n$. Let $(N_j)_j$, $(x^j)_j$ and $(\hat{x}^l)_l$ be defined analogously to \eqref{eq:def_xj} and \eqref{eq:def_xhatl}. By our construction in Section \ref{subsec:approximating_eps_jet}, Step 6 in Algo.\ \ref{algo:practical_descent_method} can only be reached when \eqref{eq:fzW_decrease} holds. This means that $(f(x^j))_j$ and $(f(\hat{x}^l))_l$ are again non-increasing sequences. Furthermore, the following theorem shows that we obtain the same convergence result as for Algo.\ \ref{algo:abstract_descent_method}:
    \begin{theorem} \label{thm:algo3_convergence}
        Assume that $f$ satisfies Assumption \ref{assum:A3} or Assumption \ref{assum:A4}. Let $(x^j)_j$ and $(\hat{x}^j)_j$ be the sequences generated by Algo.\ \ref{algo:practical_descent_method}.
        If the level set $\{ y \in \R^n : f(y) \leq f(x^0) \}$ is bounded, then $(x^j)_j$ has an accumulation point $x^* \in \R^n$ and all accumulation points of $(x^j)_j$ are critical. Furthermore, $f(\hat{x}^l) \rightarrow f(x^*)$.
    \end{theorem}
    \vspace{-15pt}
    \begin{proof}
        Existence of an accumulation point $x^*$ and $f(\hat{x}^l) \rightarrow f(x^*)$ follows as in the proof of Theorem \ref{thm:algo1_convergence}.
        Due to \eqref{eq:fzW_decrease}, we have
        \begin{align*}
            f(x^{j,i+1}) - f(x^{j,i})
            \leq - c \eps_j \tau_j \quad \forall j \in \N, i \in \{0,\dots,N_j-1\}.
        \end{align*}
        Since $f$ is bounded below in $U$ (due to continuity), all $N_j$ have to be finite. This means that the condition in Step 3 has to hold infinitely many times, i.e.,
        \begin{align*}
            -\tau_j
            < \frac{\theta^W(x^{j,N_j},\eps_j) - f(x^{j,N_j})}{\eps_j}
            &= \frac{\theta^W(x^j,\eps_j) - f(x^j)}{\eps_j}
            \quad \forall j \in \N.
        \end{align*}
        Crucially, since $\theta^W \leq \theta$, this implies
        \begin{align*}
            -\tau_j 
            < \frac{\theta^W(x^j,\eps_j) - f(x^j)}{\eps_j}
            \leq \frac{\theta(x^j,\eps_j) - f(x^j)}{\eps_j}.
        \end{align*}
        The rest of the proof is identical to the proof of Theorem \ref{thm:algo1_convergence}. 
    \end{proof}

    We conclude this section with a discussion of the behavior of Algo.\ \ref{algo:practical_descent_method} when Assumptions \ref{assum:A3} and \ref{assum:A4} do not hold. For functions that merely satisfy Assumption \ref{assum:A1}, Algo.\ \ref{algo:practical_descent_method} still produces a non-increasing sequence, since Assumption \ref{assum:A1} is sufficient for Algo.\ \ref{algo:approx_jet} to terminate. However, the following simple example shows that even Assumption \ref{assum:A2} is not sufficient to obtain convergence:
    \begin{example} \label{example:semismooth}
        Consider the function $ f : \R^2 \rightarrow \R$,
        \begin{align*}
            x
            \mapsto \max(\min(f_1(x),f_2(x)),f_3(x))
            = \max(\min(x_1 - 2 x_2, x_1 + 2 x_2),x_2),
        \end{align*}
        which satisfies Assumption \ref{assum:A2} but not Assumption \ref{assum:A4}. The graph of $f$ is shown in Figure \ref{fig:example_semismooth}(a).
        \begin{figure}
            \centering
            \parbox[b]{0.49\textwidth}{
                \centering 
                \includegraphics[width=0.49\textwidth]{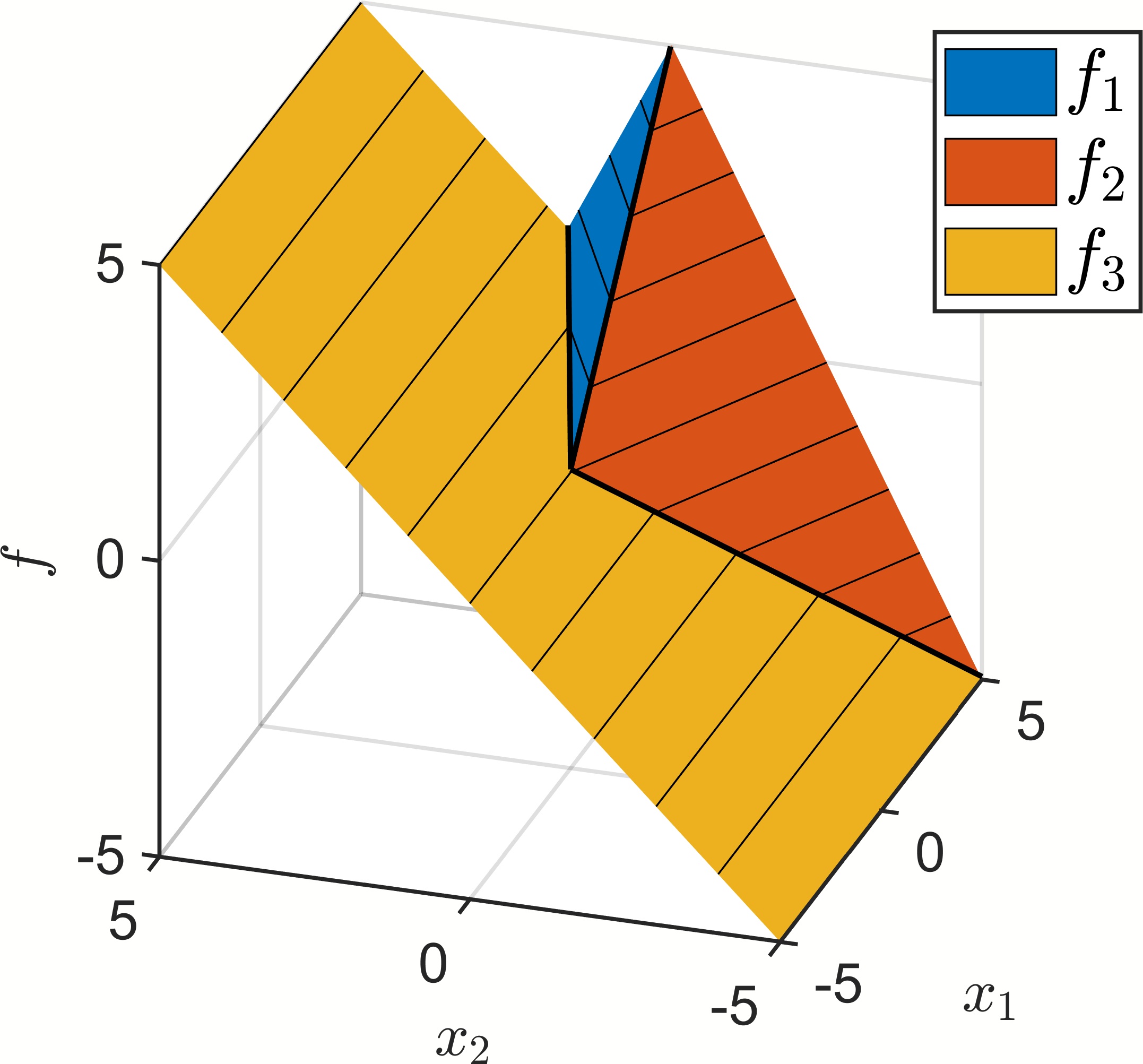}\\
                (a)
    		}
            \parbox[b]{0.49\textwidth}{
                \centering 
                \includegraphics[width=0.45\textwidth]{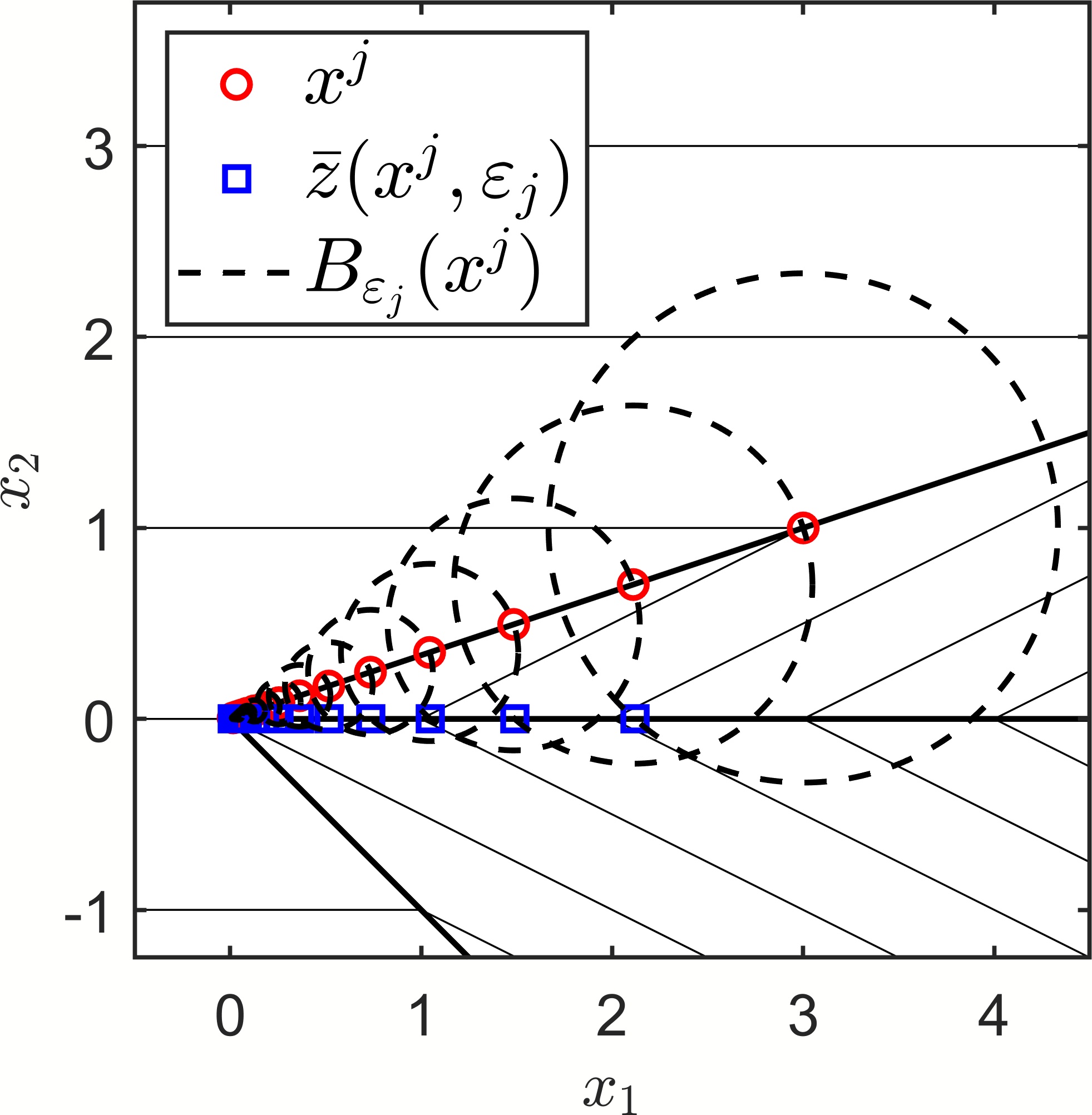}\\
                (b)
    		}
            \caption{(a) The graph of $f$ in Example \ref{example:semismooth}. (b) The sequences $(x^j)_j$ and $(\bar{z}(x^j,\eps_j))_j$ generated by Algo.\ \ref{algo:abstract_descent_method} and \ref{algo:practical_descent_method}.}
            \label{fig:example_semismooth}
        \end{figure}   
        Choose the parameters
        \begin{align*}
            x^0 = (3,1)^\top, \quad
            \eps_j = \frac{4}{3} \left( \frac{3}{3 + 4/\sqrt{10}} \right)^{j-1}, \quad
            \tau_j \in (0,1/\sqrt{10}) \quad \forall j \in \N.
        \end{align*}
        Then one can show that both Algo.\ \ref{algo:abstract_descent_method} and Algo.\ \ref{algo:practical_descent_method} generate the same sequence $(x^j)_j$ shown in Figure \ref{fig:example_semismooth}(b). (For Algo.\ \ref{algo:practical_descent_method}, this requires choosing the gradient of $f_3$ as a subgradient at points where both $f_1$ and $f_3$ are active.) Since $x^j \rightarrow 0 \in \R^2$ but $0$ is not a critical point of $f$, this shows that the convergence result does not apply here. The issue can be seen when considering the models $\T_{x^j,\eps_j}$: Since all selection functions are linear and active at some point in $B_{\eps_j}(x^j)$, it holds $\T_{x^j,\eps_j}(z) = \max(f_1(z),f_2(z),f_3(z))$. In Figure \ref{fig:example_semismooth}(b), it can be seen that the resulting model minimum $\bar{z}(x^j,\eps_j)$ does not yield a smaller objective value than $x^j$. Therefore, the algorithm increases $j$. With the smaller $\eps_{j+1}$ and $\tau_{j+1}$, it is able to perform a single descent step, but then immediately runs into the same issue as before by our choice of parameters. By construction of $f$, this behavior continues infinitely without the limit of $(x^j)_j$ being critical. In terms of the error bounds in Section \ref{subsec:second_order_model}, it is possible to show that
        \begin{align*}
            \max_{z \in B_{\eps_j}(x^j)} R_{x^j,\eps_j}(z) \geq \left( 3 - \frac{4}{\sqrt{10}} \right) \eps_j \quad \forall j \in \N,
        \end{align*}
        i.e., the modeling error is linear in $\eps$ instead of (at least) quadratic as in Lemma \ref{lem:error_convex} and Lemma \ref{lem:error_max_fun}.
    \end{example}

\section{Numerical experiments} \label{sec:numerical_experiments}

    In this section, we perform numerical experiments using a MATLAB implementation of Algo.\ \ref{algo:practical_descent_method}. The implementation is available at \url{https://github.com/b-gebken/SOGS}, including code for the reproduction of all results presented in this section. We will refer to this implementation as \sogs{} (\emph{second-order gradient sampling}) for the sake of brevity. (However, we want to emphasize that we do not claim that our method is the only way to incorporate second-order information into gradient sampling. We merely use this name for convenience.) In Section \ref{subsec:benchmark}, we compare our method to other solution methods for relatively general nonconvex, nonsmooth optimization problems. Afterwards, in Section \ref{subsec:superlinear}, we compare it to solvers that are able to achieve (provably) superlinear convergence for certain special classes of nonsmooth objective functions.

    In the following, we mention some details of our implementation: First of all, solutions of the subproblem \eqref{eq:min_TxepsW_quad} are computed via IPOPT \cite{WB2005} (with the Matlab interface from \cite{B2023}). (Note that this may only yield local solutions of \eqref{eq:min_TxepsW_quad}.) Since the original optimization problem is solved by consecutively solving the subproblem \eqref{eq:min_TxepsW_quad}, we can only solve the former problem up to the accuracy with which we solve the latter, where we simply use the default tolerances of IPOPT. Algo.\ \ref{algo:approx_jet} is initialized with all previously evaluated elements of the current $\eps$-jet, as discussed in Remark \ref{rem:initial_W}(b). The previously evaluated elements are stored in a queue (which could be regarded as a ``bundle'') with a maximum size of $100$. If the maximum size is reached and a new element is to be added, the oldest element is removed. For the parameters of Algo.\ \ref{algo:practical_descent_method}, we choose
    \begin{align*}
        c = 0.5, \quad \eps_j = (10 \cdot 0.1^{j-1})_j, \quad \tau_j = (10^{-5})_j,
    \end{align*}
    and we stop the algorithm as soon as $j = 6$ (after Step 4), i.e., the final pair of $(\eps_j,\tau_j)$ in Step 3 is $(10^{-3},10^{-5})$. While choosing $\tau_j$ as constant is not in the spirit of the convergence theory of our method, it turned out to be beneficial for the performance. (An analogous observation was made for the classical gradient sampling method, see \cite{BLO2005}, Section 4.)

    For the parameters of the other solvers we use in this section, we set the maximum number of iterations to $10^5$ and otherwise choose the default values that are given in the codes or the corresponding articles.

    \subsection{Performance on popular test problems} \label{subsec:benchmark}
    
        We compare the performance of \sogs{} to the performances of the following solution methods:
        \begin{itemize}
            \item \gradsamp{} \cite{BLO2005}\footnote{\url{https://cs.nyu.edu/~overton/papers/gradsamp/alg/}}: Classic gradient sampling method
            \item \dgs{} \cite{GP2021,G2024b,G2024a}\footnote{\url{https://github.com/b-gebken/DGS}}: Deterministic gradient sampling method
            \item \hanso{} \cite{LO2013}\footnote{\url{https://cs.nyu.edu/~overton/software/hanso/}}: Quasi-Newton (BFGS) method
            \item \slqpgs{} \cite{CO2012}\footnote{\url{https://github.com/frankecurtis/SLQPGS}}: SQP-method combined with gradient sampling
            \item \lmbm{} \cite{HMM2004,H2004}\footnote{\url{https://napsu.karmitsa.fi/lmbm/}}: Limited memory bundle method 
        \end{itemize}
        For \sogs{}, we have to be able to compute an element of the second-order $0$-jet at every $x \in \R^n$ (i.e., objective values, gradients and Hessians, cf.\ Lemma \ref{lem:jet_projections} and Lemma \ref{lem:jet_for_max_fun}). All other methods merely require objective values and subgradients. As test problems, we choose the 20 classical problems from \cite{H2004}, Appendix A, consisting of convex and nonconvex problems with prescribed initial points. They are scalable in the number of variables $n$, and we choose $n = 50$ for all problems. For computing derivatives, we use the exact analytic formulas.
                
        The performance metric we consider is the number of $\partial f$ evaluations. For \sogs{}, the number of $\partial f$ evaluations coincides with the number of $\nabla^2 f$ evaluations and is by one smaller than the number of $f$ evaluations. For \hanso{} and \lmbm{}, the number of $\partial f$ and $f$ evaluations coincide. For \gradsamp{} and \dgs{}, the number of $f$ evaluations is larger than the number of $\partial f$ evaluations. (Although it appears that the $f$ evaluations during the random sampling step in \gradsamp{} are unnecessary.) For \slqpgs{}, the number of $f$ evaluations is significantly lower than the number of $\partial f$ evaluations.

        \setlength\tabcolsep{0.5pt}
        \begin{table}
    		\centering
    		\caption{Results of applying the solvers listed in Section \ref{subsec:benchmark} to the 20 test problems in \cite{H2004}. For each solver, the left column shows the total number of $\partial f$ evaluations and the right column shows the distance of the smallest found objective value to the optimal value. (For test problems where the exact optimal value is unknown, we used the smallest objective values that we encountered during all of our experiments. They can be found alongside our implementation.)}
    		\label{table:performance}
    			\begin{tabular}{|c !{\vrule width 1.5pt} c | c !{\vrule width 1.5pt} c | c !{\vrule width 1.5pt} c | c !{\vrule width 1.5pt} c | c !{\vrule width 1.5pt} c | c !{\vrule width 1.5pt} c | c !{\vrule width 1.5pt}}
                    \hline
    				& \multicolumn{2}{c !{\vrule width 1.5pt}}{\sogs{}} & \multicolumn{2}{c !{\vrule width 1.5pt}}{\gradsamp{}} & \multicolumn{2}{c !{\vrule width 1.5pt}}{\dgs{}} & \multicolumn{2}{c !{\vrule width 1.5pt}}{\hanso{}} & \multicolumn{2}{c !{\vrule width 1.5pt}}{\slqpgs{}} & \multicolumn{2}{c !{\vrule width 1.5pt}}{\lmbm{}} \\
    				\hline
    				No. & \#$\partial f$ & Acc. & \#$\partial f$ & Acc. & \#$\partial f$ & Acc. & \#$\partial f$ & Acc. & \#$\partial f$ & Acc. & \#$\partial f$ & Acc. \\
    				\hline
                    1. & 378 & 3.0e-15 & 61200 & 2.8e-13 & 687 & 1.3e-06 & 968 & 2.2e-08 & 46864 & 2.6e-06 & 569 & 4.8e-07 \\ 
                    \hline 
                    2. & 107 & 8.3e-06 & 14400 & 2.5e-07 & 163 & 1.1e-04 & 307 & 3.9e-08 & 11009 & 8.6e-03 & 674 & 2.4e-05 \\ 
                    \hline 
                    3. & 71 & 7.3e-08 & 38900 & 8.3e-06 & 727 & 1.0e-05 & 574 & 2.2e-06 & 27876 & 5.3e-06 & 583 & 3.6e-08 \\ 
                    \hline 
                    4. & 545 & 1.2e-07 & 24300 & 5.2e-05 & 284 & 1.4e-04 & 968 & 4.5e-05 & 25553 & 1.9e-05 & 239 & 4.8e-07 \\ 
                    \hline 
                    5. & 26 & 9.7e-09 & 115400 & 1.6e-05 & 94 & 4.4e-05 & 129 & 2.2e-09 & 4343 & 2.9e-06 & 180 & 2.7e-09 \\ 
                    \hline 
                    6. & 22 & 1.4e-08 & 6100 & 5.3e-07 & 22 & 4.1e-07 & 27 & 2.4e-05 & 29593 & 1.6e-06 & 156 & 3.2e-08 \\ 
                    \hline 
                    7. & 250 & 8.7e-08 & 9500 & 2.5e-05 & 104 & 5.8e-05 & 102 & 1.2e-04 & 16261 & 8.9e-06 & 633 & 5.4e-09 \\ 
                    \hline 
                    8. & 436 & 1.7e-07 & 55500 & 4.5e-05 & 1564 & 2.7e-05 & 1136 & 6.6e-06 & 28684 & 4.8e-05 & 2765 & 6.9e-04 \\ 
                    \hline 
                    9. & 12 & 1.7e-09 & 37800 & 1.3e-06 & 60 & 4.4e-05 & 51 & 9.4e-06 & 5050 & 5.0e-06 & 112 & 1.5e-09 \\ 
                    \hline 
                    10. & 14 & 6.0e-09 & 61900 & 2.0 & 331 & 2.9e-05 & 453 & 7.4e-07 & 73124 & 8.7e-06 & 873 & 3.7e-07 \\ 
                    \hline 
                    11. & 452 & 3.2e-10 & 61100 & 5.0e-07 & 814 & 1.0e-06 & 100 & 0.0 & 850824 & 6.6e-07 & 10002 & 4.5e+01 \\ 
                    \hline 
                    12. & 178 & 2.6e-05 & 13500 & 2.9e-06 & 874 & 2.1e-05 & 250 & 2.7e-08 & 10807 & 3.2e-04 & 1026 & 7.0e-07 \\ 
                    \hline 
                    13. & 129 & 2.0e-10 & 42600 & 2.8e-06 & 650 & 6.6e-06 & 32 & 3.0 & 67165 & 1.1e-05 & 49 & 2.0 \\ 
                    \hline 
                    14. & 537 & 5.3e+02 & 70500 & 5.3e+02 & 1267 & 5.3e+02 & 675 & 5.3e+02 & 94536 & 5.3e+02 & 473 & 5.3e+02 \\ 
                    \hline 
                    15. & 595 & 9.2e-07 & 65900 & 1.9e-04 & 5686 & 5.9e-04 & 401 & 2.1e-04 & 11009 & 8.0e-05 & 815 & 4.4e-08 \\ 
                    \hline 
                    16. & 496 & 6.4e-08 & 35400 & 5.0e-07 & 436 & 1.1e-06 & 32 & 2.1e-02 & 9797 & 3.5e-07 & 1618 & 2.1e-05 \\ 
                    \hline 
                    17. & 1202 & 1.9e-11 & 39100 & 1.5e-11 & 657 & 1.6e-07 & 1952 & 1.3e-09 & 13534 & 1.7e-05 & 250 & 1.0 \\ 
                    \hline 
                    18. & 109 & 2.6e-09 & 232600 & 3.1e-06 & 1939 & 5.8e-06 & 2012 & 1.3e-04 & 23735 & 2.9e-06 & 190 & 5.0e-01 \\ 
                    \hline 
                    19. & 292 & 9.3e-08 & 269700 & 4.3e-04 & 5982 & 2.7e-04 & 16937 & 1.2e-08 & 5252 & 5.6e-04 & 57 & 5.8e-04 \\ 
                    \hline 
                    20. & 2041 & 8.0e-08 & 634400 & 3.4e-02 & 5036 & 4.1e-06 & 539 & 4.2e+01 & 33734 & 5.8e-03 & 1137 & 6.7e-02 \\ 
                    \hline 
    			\end{tabular}	
    	\end{table}
        
        The results are shown in Table \ref{table:performance}. For \gradsamp{} and \slqpgs{}, we see that they need significantly more $\partial f$ evaluations than the other methods. This is no surprise, as both methods employ random sampling, which is known to be an inefficient way (with respect to the number of evaluations) to approximate the $\eps$-subdifferential. For the remaining methods, we see that \sogs{} has both the highest accuracy and the smallest number of $\partial f$ evaluations on Problems 1, 6, 8, 10 and 18, and for \hanso{}, this is true for Problem 11. For all other problems, Table \ref{table:performance} cannot be used to determine a favorable method, as the highest accuracy is not achieved with the smallest number of evaluations.
        
        In theory, a proper comparison is only possible if the parameters for all methods are chosen in a way that results of the same accuracy are computed. Unfortunately, due to the diversity of the stopping criteria employed in these methods, we were unable to achieve this. (Furthermore, while all methods are iterative methods, only \sogs{}, \dgs{} and \slqpgs{} return the entire sequence of iterates.) Nonetheless, in an attempt to emulate a perfect tuning of parameters, we consider a second performance metric: For all methods except \sogs{}, we record all points in which a subgradient was evaluated. (By construction of each method, the actual iterates are a subset of these points.) After each method finishes, we evaluate the objective function in each of these points, and count how many $\partial f$ evaluations were required to encounter an objective value whose distance to the best value is at most $10^{-4}$. Clearly, this performance metric is not a fair comparison, as it ignores the additional evaluations the methods would have to perform to ``naturally'' terminate. (This may even lead to smaller objective values than the ones identified by the methods, as they may not factor in objective values at points where the subgradient is evaluated.) To avoid giving any unfair advantage to our own method, we only consider the actual iterates $x^{j,i}$ for \sogs{}. Additionally, once an iterate with a sufficiently small objective value is encountered, we also count the evaluations during the sampling step at that iterate, as these would be required for our method to naturally terminate. The results are shown in Table \ref{table:performance_cutoff}. We see that \sogs{} has the least number of $\partial f$ evaluations for 13 out of the 20 test problems (and for Problem 14, none of the methods found the minimum).

        \begin{table}
    		\centering
    		\caption{Same as Table \ref{table:performance}, but all methods are artificially terminated once they encounter the first point with an accuracy of at least $10^{-4}$. For every problem, the data for the method with the least number of $\partial f$ evaluations is written in bold. A dash indicates that a method did not achieve the accuracy threshold at all.} 
    		\label{table:performance_cutoff}
    			\begin{tabular}{|c !{\vrule width 1.5pt} c | c !{\vrule width 1.5pt} c | c !{\vrule width 1.5pt} c | c !{\vrule width 1.5pt} c | c !{\vrule width 1.5pt} c | c !{\vrule width 1.5pt} c | c !{\vrule width 1.5pt}}
                    \hline
    				& \multicolumn{2}{c !{\vrule width 1.5pt}}{\sogs{}} & \multicolumn{2}{c !{\vrule width 1.5pt}}{\gradsamp{}} & \multicolumn{2}{c !{\vrule width 1.5pt}}{\dgs{}} & \multicolumn{2}{c !{\vrule width 1.5pt}}{\hanso{}} & \multicolumn{2}{c !{\vrule width 1.5pt}}{\slqpgs{}} & \multicolumn{2}{c !{\vrule width 1.5pt}}{\lmbm{}} \\
    				\hline
    				No. & \#$\partial f$ & Acc. & \#$\partial f$ & Acc. & \#$\partial f$ & Acc. & \#$\partial f$ & Acc. & \#$\partial f$ & Acc. & \#$\partial f$ & Acc. \\
    				\hline
                    1. & \textbf{374} & \textbf{3.0e-15} & 51701 & 6.8e-05 & 573 & 9.7e-05 & 451 & 9.1e-05 & 35250 & 9.8e-05 & 446 & 9.5e-05 \\ 
                    \hline 
                    2. & \textbf{22} & \textbf{3.8e-05} & 4709 & 8.7e-05 & - & - & 105 & 8.5e-05 & - & - & 316 & 4.7e-05 \\ 
                    \hline 
                    3. & \textbf{27} & \textbf{4.4e-05} & 24101 & 7.4e-05 & 429 & 9.4e-05 & 428 & 9.5e-05 & 13939 & 9.9e-05 & 403 & 8.2e-05 \\ 
                    \hline 
                    4. & 482 & 1.9e-05 & 22001 & 9.6e-05 & 196 & 9.3e-05 & 879 & 9.9e-05 & 17474 & 8.7e-05 & \textbf{103} & \textbf{9.7e-05} \\ 
                    \hline 
                    5. & \textbf{21} & \textbf{2.3e-07} & 17391 & 5.6e-05 & 88 & 6.5e-05 & 73 & 3.9e-05 & 2631 & 8.8e-05 & 80 & 6.1e-05 \\ 
                    \hline 
                    6. & \textbf{15} & \textbf{3.1e-05} & 3102 & 9.7e-05 & 18 & 4.3e-05 & 27 & 2.4e-05 & 28042 & 7.9e-05 & 74 & 1.4e-05 \\ 
                    \hline 
                    7. & 170 & 6.0e-05 & 6001 & 7.7e-05 & 99 & 4.4e-05 & 100 & 1.7e-05 & 10000 & 7.6e-05 & \textbf{83} & \textbf{9.6e-05} \\ 
                    \hline 
                    8. & \textbf{73} & \textbf{8.2e-05} & 43601 & 8.6e-05 & 817 & 9.9e-05 & 850 & 9.7e-05 & 20403 & 9.9e-05 & - & - \\ 
                    \hline 
                    9. & \textbf{4} & \textbf{1.7e-09} & 30815 & 1.4e-05 & 59 & 2.1e-05 & 41 & 3.2e-05 & 2991 & 7.2e-05 & 37 & 9.9e-05 \\ 
                    \hline 
                    10. & \textbf{6} & \textbf{6.0e-09} & - & - & 298 & 8.1e-05 & 237 & 9.6e-05 & 39795 & 9.7e-05 & 207 & 8.1e-05 \\ 
                    \hline 
                    11. & 444 & 3.2e-10 & 55001 & 9.7e-05 & 639 & 9.6e-05 & \textbf{100} & \textbf{0.0} & 847707 & 9.9e-05 & - & - \\ 
                    \hline 
                    12. & \textbf{14} & \textbf{4.8e-05} & 6810 & 9.3e-05 & 497 & 8.1e-05 & 94 & 9.7e-05 & - & - & 483 & 7.9e-05 \\ 
                    \hline 
                    13. & \textbf{122} & \textbf{2.0e-10} & 37901 & 7.2e-05 & 535 & 9.9e-05 & - & - & 52622 & 9.9e-05 & - & - \\ 
                    \hline 
                    14. & - & - & - & - & - & - & - & - & - & - & - & - \\ 
                    \hline 
                    15. & 522 & 4.0e-05 & 65601 & 6.3e-05 & - & - & - & - & 10682 & 8.6e-05 & \textbf{397} & \textbf{7.8e-05} \\ 
                    \hline 
                    16. & 274 & 2.4e-05 & 30101 & 7.3e-05 & \textbf{257} & \textbf{9.7e-05} & - & - & 5556 & 9.4e-05 & 1519 & 9.4e-05 \\ 
                    \hline 
                    17. & 835 & 8.1e-05 & 15601 & 9.7e-05 & \textbf{330} & \textbf{8.1e-05} & 482 & 9.9e-05 & 9192 & 9.8e-05 & - & - \\ 
                    \hline 
                    18. & \textbf{101} & \textbf{2.6e-09} & 168001 & 9.8e-05 & 1496 & 1.0e-04 & - & - & 18686 & 9.1e-05 & - & - \\ 
                    \hline 
                    19. & \textbf{109} & \textbf{5.7e-05} & - & - & - & - & 10646 & 9.9e-05 & - & - & - & - \\ 
                    \hline 
                    20. & \textbf{1948} & \textbf{9.6e-05} & - & - & 3345 & 9.9e-05 & - & - & - & - & - & - \\ 
                    \hline 
    			\end{tabular}	
    	\end{table}
                
        While these results suggest that \sogs{} is more efficient than the other methods in terms of overall oracle calls, we emphasize that \sogs{} is also the only method that requires second-order information. Additionally, even when ignoring $\nabla^2 f$ evaluations, the computations that have to be performed in every iteration of \sogs{} (i.e., the solution of subproblem \eqref{eq:min_TxepsW_quad}) are significantly more expensive than the ones in other methods. This can be seen in Table \ref{table:runtimes}, which compares the runtime of \sogs{} to the runtimes of the other solvers for our set of test problems. As such, it appears that \sogs{} in its current state is only useful when Hessian matrices are available and oracle calls are relatively expensive. 

        \begin{table}
    		\centering
    		\caption{Comparison of the runtime (in seconds) of \sogs{} to the runtimes of the other solvers. Each column corresponds to one other solver, and the times are the summed up runtimes on all problems where both \sogs{} and the other solver got to within $10^{-4}$ of the exact optimal value.} 
    		\label{table:runtimes}
                \begin{tabular}{| c | c | c | c | c | c |}
                    \hline
                     & vs.\ \gradsamp{} & vs.\ \dgs{} & vs.\ \hanso{} & vs.\ \slqpgs{} & vs.\ \lmbm{} \\
                    \hline
                    \sogs{} & 269.3 & 270.4 & 166.4 & 260.4 & 176.0 \\ 
                    \hline 
                    Other solver & 176.5 & 16.1 & 8.3 & 210.2 & 5.6 \\ 
                    \hline 
    			\end{tabular}	
    	\end{table}

    \subsection{Comparison to superlinear solvers for special problem classes} \label{subsec:superlinear}

        In the previous experiment, we examined the performance of \sogs{} in terms of total oracle calls and runtime. In the following, we attempt to gain insight into the more theoretical property of its rate of convergence. To this end, we compare \sogs{} to the two methods that achieve superlinear convergence on two classes of nonsmooth problems with special structure, which are the \vu-algorithm \cite{MS2005} and SuperPolyak \cite{CD2024} (cf.\ Section \ref{sec:introduction}).

        For our numerical experiments with the \vu-algorithm, we use the implementation \vubundle{} from the Julia package \texttt{NonSmoothSolvers.jl}\footnote{\url{https://github.com/GillesBareilles/NonSmoothSolvers.jl}}. As a test problem, we use the \emph{half-and-half} function from \cite{LO2008} (also considered in \cite{MS2012}), where
        \begin{equation} \label{eq:halfhalf}
            \begin{aligned}
                &f : \R^8 \rightarrow \R, \quad x \mapsto \sqrt{x^\top A x} + x^\top B x, \\
                &A_{i,j} := 
                \begin{cases}
                    1, & i = j \in \{1,3,5,7\},\\
                    0, & \text{otherwise,}
                \end{cases}, \quad 
                B_{i,j} := 
                \begin{cases}
                    1/i^2, & i = j,\\
                    0, & \text{otherwise.}
                \end{cases}
            \end{aligned}
        \end{equation}
        
        As in \cite{MS2012} the initial point $x^0 = (20.08,\dots,20.08)^\top \in \R^8$ is used. Since the default accuracy of \vubundle{} is higher than the default accuracy of IPOPT, we change the latter from the default $10^{-8}$ to $10^{-10}$ to compute results of similar quality. (See the parameter \verb|tol| in the documentation of IPOPT for details.) The remaining parameters of \sogs{} are the same as stated at the beginning of this section. The result is shown in Figure \ref{fig:example_superlinear}(a).
        \begin{figure}
            \centering
            \parbox[b]{0.49\textwidth}{
                \centering 
                \includegraphics[width=0.45\textwidth]{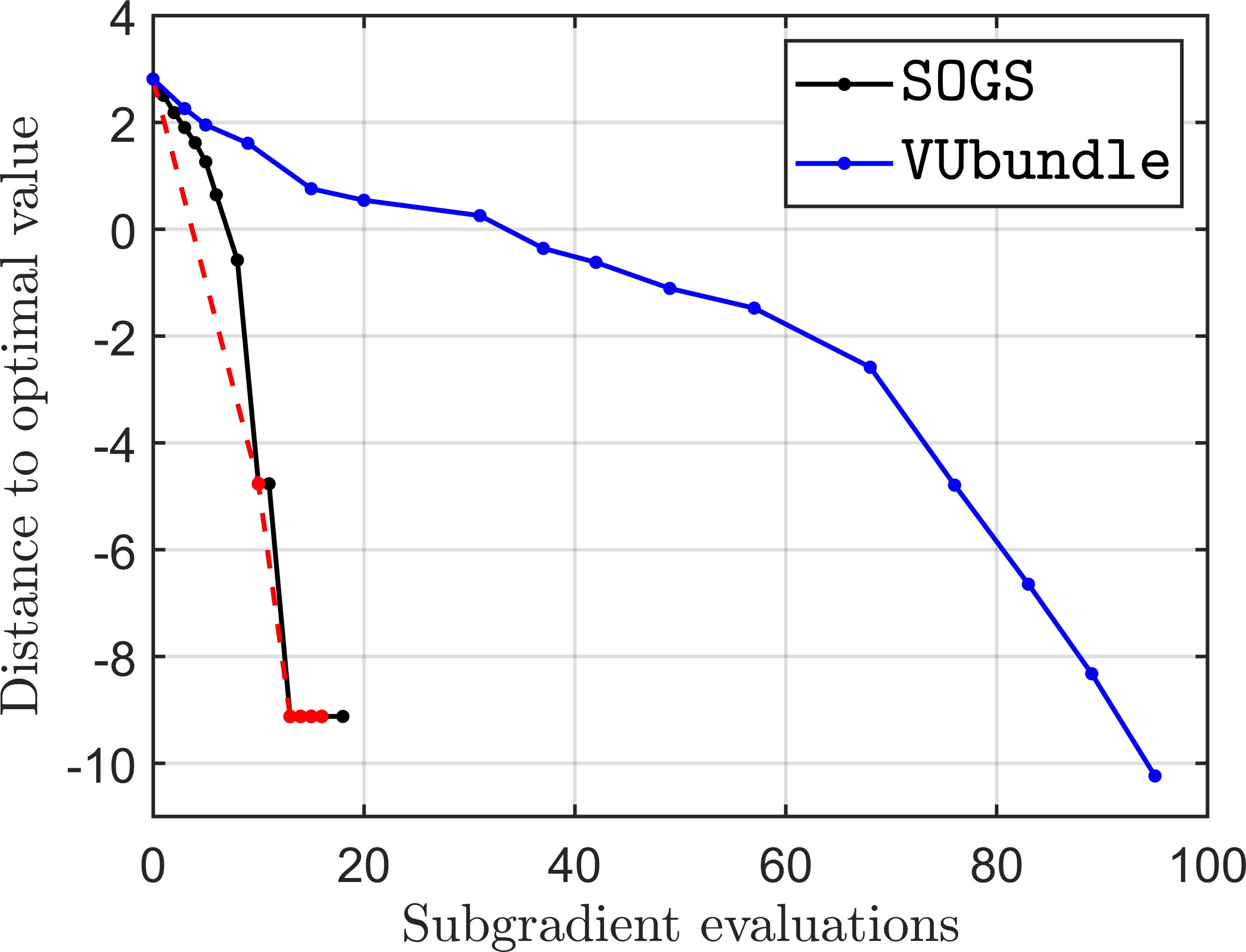}\\
                (a)
    		}
            \parbox[b]{0.49\textwidth}{
                \centering 
                \includegraphics[width=0.45\textwidth]{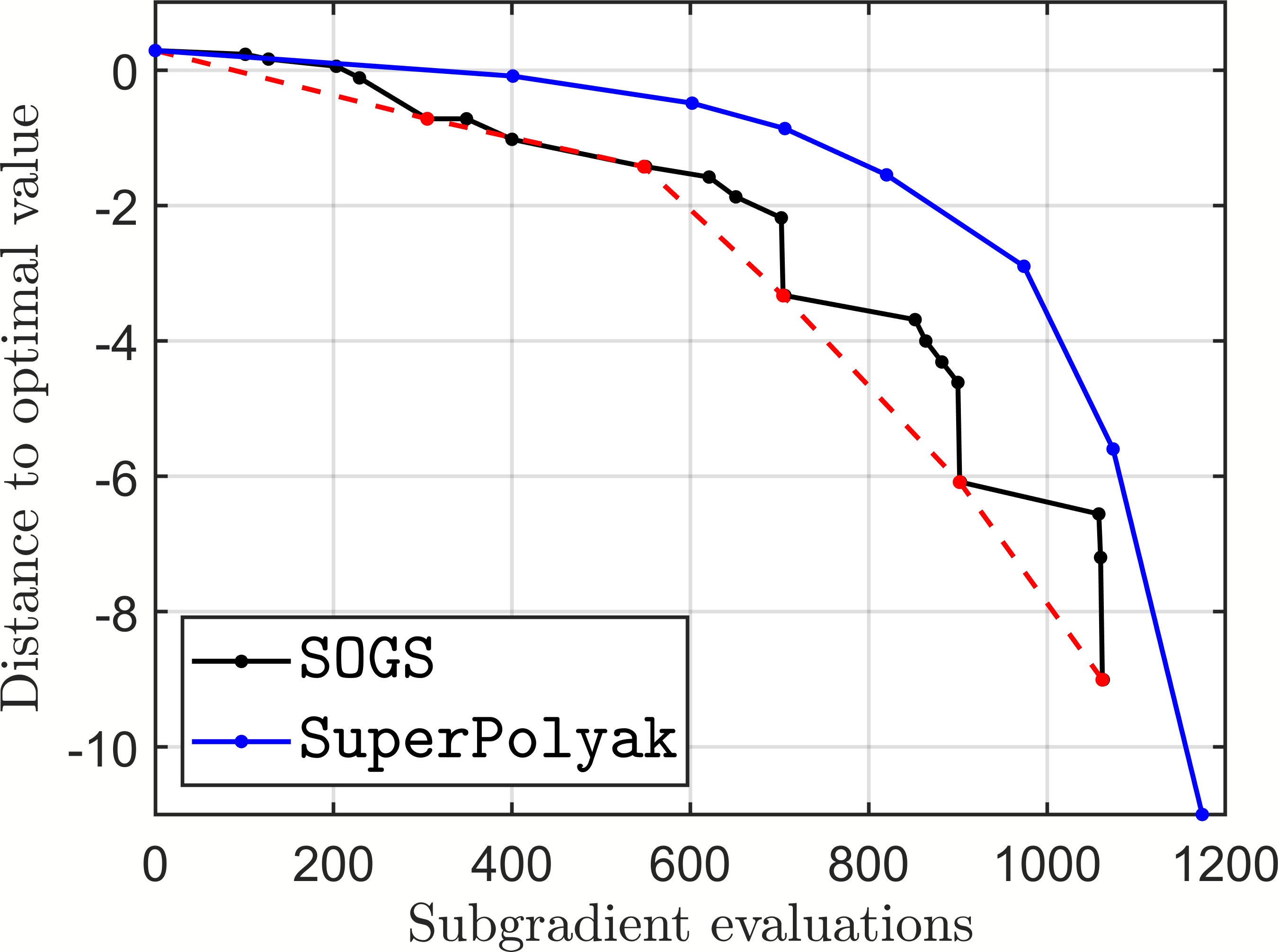}\\
                (b)
    		}
            \caption{(a) The dots represent the iterates of \sogs{} (i.e., $(\hat{x}^l)_l$) and \vubundle{} for Problem \eqref{eq:halfhalf}. The horizontal axis shows the number of $\partial f$ evaluations required up to each iterate and the vertical axis shows the distance (in logarithmic scale) of the objective value to the minimal value at each iterate. For \sogs{}, the red dots highlight the subsequence $(x^j)_j$ among $(\hat{x}^l)_l$, cf.\ \eqref{eq:def_xj}, \eqref{eq:def_xhatl}. (b) Same as (a) for the solver \superpolyak{} and Problem \eqref{eq:max_root}. (Not shown is the final iterate of \superpolyak{}, which took 1274 subgradient evaluations and reached the optimal value up to machine precision.)}
            \label{fig:example_superlinear}
        \end{figure}   
        For \vubundle{} we observe (roughly) superlinear convergence, as expected. For the first iterates of \sogs{} we observe the same. However, at an accuracy of about $10^{-9}$, \sogs{} gets stuck. This is not a surprise, as this is around the accuracy of IPOPT. (Unfortunately, we were unable to further increase the accuracy of IPOPT by further lowering the tolerances.) When comparing the two methods, we see that \sogs{} requires fewer overall $\partial f$ evaluations than \vubundle{}.

        For SuperPolyak we use the Julia implementation \texttt{SuperPolyak.jl}\footnote{\url{https://github.com/COR-OPT/SuperPolyak.jl}}. As a test problem, we consider the nonconvex function
        \begin{align} \label{eq:max_root}
            f_a : \R^n \rightarrow \R, \quad x \mapsto \max_{i \in \{1,\dots,n\}} \left( \sqrt{|x_i| + a} - \sqrt{a} \right)
        \end{align}
        for $a = 0.1$ and $n = 100$ and the initial point $x^0 = (5,\dots,5)^\top \in \R^n$. (Note that this function is locally Lipschitz for $a > 0$.) It is easy to see that $x^* = 0 \in \R^n$ is a sharp minimum of $f$. For the same reason as above, we set the accuracy of IPOPT to $10^{-10}$ and leave all other parameters unchanged. The result is shown in Figure \ref{fig:example_superlinear}(b). For \superpolyak{} we observe (remarkably clean) superlinear convergence, as expected. For \sogs{} there is clearly no superlinear convergence in the sense of Q-convergence (cf.\ \cite{NW2006}, Appendix A.2). However, there still seems to be superlinear R-convergence (i.e., there seems to be some $(\nu_l)_l \in \R^{>0}$ that vanishes Q-superlinearly with $| f(\hat{x}^l) - f(x^*) | \leq \nu_l$ for all $l \in \N$). Furthermore, the subsequence $(x^j)_j$ appears to converge Q-superlinearly with a bounded number of subgradient evaluations in between each iterate.

\section{Conclusion} \label{sec:conclusion}

    In this article, we introduced a new concept for second-order information of nonsmooth functions, the second-order $\eps$-jet, and used it to construct a second-order gradient sampling method. We showed convergence of this method for the case where the objective is convex or of max-type. While we did not provide any theoretical results on its speed of convergence, our numerical experiments suggest that in terms of oracle calls, it is fast (in a sense we have yet to theoretically capture).

    We believe that there are many possibilities for future work building on this article:
    \begin{itemize}
        \item While our approach appears to be efficient in terms of oracle calls, it is not efficient in terms of computational cost per iteration. The largest bottleneck is the solution of the subproblem \eqref{eq:min_TxepsW_quad}. In every iteration of the practical Algo.\ \ref{algo:practical_descent_method}, it has to be solved once for computing the next iterate, but potentially many times prior to that for approximating the $\eps$-jet via Algo.\ \ref{algo:approx_jet}. To make things worse, we technically require global solutions of this subproblem. (However, considering our numerical experiments, local solutions do not seem to cause any issues.) In \sogs{} this subproblem is simply treated and solved as a general nonlinear, constrained optimization method. But considering its special structure as a QCQP, there may be more efficient ways of dealing with it. (For example, using a method like \cite{L2005}.) Furthermore, since two consecutive subproblems solved in Algo.\ \ref{algo:approx_jet} only differ by one constraint, it may be possible to warm-start the solution process in some way. Alternatively, inexact solutions of the subproblem could be sufficient. 
        \item A second issue of our approach is the need for Hessian matrices. An obvious question is whether quasi-Newton strategies can be employed instead. Since we essentially consider multiple Hessian matrices simultaneously in every iteration, this motivates the use of multiple quasi-Newton matrices for their approximation. While quasi-Newton ideas have appeared before in nonsmooth optimization, the idea of using multiple approximating matrices in this way is, to the best of our knowledge, novel.
        \item We believe that the behavior in Figure \ref{fig:example_superlinear}(b), i.e., superlinear R-convergence of $(\hat{x}^l)_l$, superlinear Q-convergence of $(x^j)_j$ and a bounded number of subgradient evaluations in between, could be the general behavior of our method for a suitable class of objective functions and proper choices of $(\eps_j)_j$ and $(\tau_j)_j$. An actual proof might be possible by exploiting the cubic error estimate in Lemma \ref{lem:error_max_fun}. (For the results in this article, we only used the fact that this error is quadratic, but not that it is actually cubic.) Furthermore, in \cite{G2024a}, it was shown how the speed of convergence of an arbitrary sequence $(x^j)_j \in \R^n$ to a critical point can be inferred from the speed at which $\min(\| \partial_{\eps_j} f(x^j) \|)$ vanishes. In our theory, $\min(\| \partial_{\eps_j} f(x^j) \|)$ appears in Lemma \ref{lem:model_descent_estimate}(b), by which it is related to $(\tau_j)_j$ via the condition in Step 3 of Algo.\ \ref{algo:practical_descent_method}.
        \item As in trust-region methods, it could turn out to be beneficial to include a mechanism for increasing $\eps$ into our approach.
        \item Aside from the method in this article, there may be other uses for the second-order $\eps$-jet (or $0$-jet) in nonsmooth optimization. For example, it might fit as the theoretical foundation for a proper analysis of smooth quasi-Newton methods in the nonsmooth setting (cf.\ the challenge in Section 7 in \cite{LO2013}).
        \item It would be interesting to analyze the relationship of $\Jeps(x)$ to the second-order theory in convex analysis (cf.\ \cite{RW1998}, Chapter 13). For example, by Alexandrov's theorem (cf.\ \cite{NP2006}, Theorem 3.11.2), convexity implies the existence of a so-called \emph{Alexandrov Hessian} almost everywhere, which is likely related to the second-order information in $\Jeps(x)$.
    \end{itemize}

\backmatter

\noindent \textbf{Acknowledgements.} \quad This research was funded by Deutsche Forschungsgemeinschaft (DFG, German Research Foundation) via the projects 545166481 (WBP Position) and 314151124 (Priority Programme 1962).

\bibliography{references}

\end{document}